\newcommand{\RR}{\mathbb{F}}
\newcommand{\kk}{\mathbb{K}}
\newcommand{\NN}{\mathbb{N}}
\newcommand{\xx}{\mathbf{x}}
\newcommand{\mm}{\mathfrak{m}}
\newcommand{\rank}{\normalfont\text{rank}}
\newcommand{\reg}{\normalfont\text{reg}}
\newcommand{\grade}{\normalfont\text{grade}}
\newcommand{\Tor}{\normalfont\text{Tor}}
\newcommand{\pd}{\normalfont\text{pd}}
\newcommand{\Ker}{\normalfont\text{Ker}}
\newcommand{\im}{\normalfont\text{Im}}
\newcommand{\HT}{\normalfont\text{ht}}
\newcommand{\Syz}{\normalfont\text{Syz}}
\newcommand{\II}{\hat{I}}
\newcommand{\pp}{\normalfont\mathfrak{p}}
\newcommand{\Spec}{\normalfont\text{Spec}}
\newtheorem{headthm}{Theorem}
\newtheorem{theorem}{Theorem}
\newaliascnt{lemma}{theorem}
\newtheorem{lemma}[lemma]{Lemma}
\newaliascnt{corollary}{theorem}
\newtheorem{corollary}[corollary]{Corollary}
\newaliascnt{conjecture}{theorem}
\newaliascnt{proposition}{theorem}
\newtheorem{proposition}[proposition]{Proposition}
\newaliascnt{definition}{theorem}
\newtheorem{definition}[definition]{Definition}
\newtheorem*{notation}{Notation}
\newaliascnt{example}{theorem}
\newtheorem{example}[example]{Example}
\newaliascnt{remark}{theorem}
\newtheorem{remark}[remark]{Remark}
\newaliascnt{problem}{theorem}
\newtheorem{problem}[problem]{Problem}
\newaliascnt{construction}{theorem}
\newtheorem{construction}[construction]{Construction}
\newaliascnt{question}{theorem}
\def\equationautorefname~#1\null{(#1)\null}
\def\sectionautorefname~#1\null{Appendix\null}
\def\sectionautorefname~#1\null{Section #1\null}
\begin{document}
	
	\begin{frontmatter}

		\title{Bounding the degrees of a minimal $\mu$-basis for a rational surface parametrization.}
		
		\author{Yairon Cid-Ruiz}
		
		\address{Department de Matem\`{a}tiques i Inform\`{a}tica, Facultat de Matem\`{a}tiques i Inform\`{a}tica, Universitat de Barcelona, Gran Via de les Corts Catalanes, 585; 08007 Barcelona, Spain.}
		
		\ead{ycid@ub.edu}
		\ead[url]{http://www.ub.edu/arcades/ycid.html}
		
		\begin{abstract}
			In this paper,  we study how the degrees of the elements in a minimal $\mu$-basis of a parametrized surface behave. 			
			For an arbitrary rational surface parametrization $P(s,t)=(a_1(s,t),a_2(s,t),a_3(s,t),\\a_4(s,t)) \in \RR[s,t]^4$ over an infinite field $\RR$, we show the existence of a $\mu$-basis with polynomials bounded in degree by $O(d^{33})$, where $d=\max(\deg(a_1),\deg(a_2), \deg(a_3), \deg(a_4))$.
			Under additional assumptions we can obtain tighter bounds.
			
			%Our proof depends on obtaining a homogeneous ideal in $\RR[s,t,u]$ by homogenizing  $a_1,a_2,a_3,a_4$.
			%Making additional assumptions on this homogeneous ideal we can obtain tighter bounds.
			% If the projective dimension of this homogeneous ideal is one, then the bound for the $\mu$-basis is $d$.
		   % If this homogeneous ideal has height three, then the bound for the $\mu$-basis is in the order of $O({d}^{22})$.
		%   When this homogeneous ideal is a general Artinian almost complete intersection, then the bound for the $\mu$-basis is in the order of $O({d}^{12})$. 			
		\end{abstract}
		
		\begin{keyword}
			%% keywords here, in the form: keyword \sep keyword
			Syzygies, $\mu$-basis, Koszul complex, Hilbert Syzygy Theorem, Quillen-Suslin Theorem, unimodular matrix, liaison.
			%% PACS codes here, in the form: \PACS code \sep code
			
			%% MSC codes here, in the form: \MSC code \sep code
			%% or \MSC[2008] code \sep code (2000 is the default)
			\MSC[2010] 13D02 \sep 14Q10.
		\end{keyword}
		
	\end{frontmatter}

	\section{Introduction}

	The concept of a $\mu$-basis is an important notion in Computer Aided Geometric Design and Geometric Modeling, that was introduced in \cite{MU_BASIS_CURVES} to study the implicitization problem in the case of parametrized curves.
	The $\mu$-basis of a parametric curve is a well-understood object that provides the implicit equation by computing its resultant \cite[Section 4]{MU_BASIS_CURVES} and has a number of applications in the study of rational curves (see e.g.  \cite{CHEN_SEDERB_IMP, CHEN_WANG_YANG_SING, GOLDMAN_JIA}).
	On the other hand, the $\mu$-basis of a parametric surface is a more complicated object and its development took several years of research. 
	In a first attempt, for the particular case of rational ruled surfaces the concept of a $\mu$-basis was defined in  \cite{MU_BASIS_RULED_SURF} and \cite{CHEN_WANG_REVIS} (also, see \cite{DOHM}).
	Later, in \cite{MU_BASIS}, the existence of a $\mu$-basis was proved for an arbitrary rational surface.

	\medskip	
	
	The existence of $\mu$-bases for rational surfaces is a strong result whose  geometrical meaning is that any rational surface is the intersection of three moving planes without extraneous factors.
	Additionally, a $\mu$-basis of a rational surface parametrization coincides with a basis of the syzygy module \cite[Corollary 3.1]{MU_BASIS} and can be used to obtain the implicit equation \cite[Section 4]{MU_BASIS}.
	Contrary to the case of rational curves, there is no known upper bound for the degrees of the elements in a minimal $\mu$-basis of a rational surface parametrization. 
	The main purpose of this article is to obtain such an upper bound. 
	
	\medskip
	
	In order to describe the main results of this paper, we briefly recall the notion of a $\mu$-basis for a rational parametric surface.

	Let $\RR$ be an infinite field and $R$ be the polynomial ring $R=\RR[s,t]$. 
	
	\begin{definition}
		A rational surface parametrization in homogeneous form is defined by
		\begin{equation}
			\label{rational_surface}
			P(s, t) = \big(a_1(s, t), a_2(s, t), a_3(s, t), a_4(s, t)\big)
		\end{equation}
		where $a_1, a_2, a_3, a_4 \in R$ and $\gcd(a_1, a_2, a_3, a_4) = 1$. 
	\end{definition} 
	
	\begin{definition}	
		A moving plane following the rational parametrization \autoref{rational_surface} is a quadruple 
		$$
		\big(A(s, t), B(s, t), C(s, t), D(s, t)\big) \in R^4
		$$
		such that 
		\begin{equation}
			\label{moving_plane}
			A(s, t)a_1(s,t) + 
			B(s, t)a_2(s,t) +  C(s, t)a_3(s,t) + D(s, t)a_4(s,t) = 0.
		\end{equation}
	\end{definition}
	
	\begin{definition}
		Let $\mathbf{p}=(p_1,p_2,p_3,p_4), \mathbf{q}=(q_1,q_2,q_3,q_4), \mathbf{r}=(r_1,r_2,r_3,r_4)$ be three moving planes such that 
		\begin{equation}
			\label{equation_MU_BASIS}
			[\mathbf{p},\mathbf{q},\mathbf{r}]=\alpha P(s,t)
		\end{equation}
		for some nonzero constant $\alpha \in \RR$. Then $\mathbf{p},\mathbf{q},\mathbf{r}$ are said to be a $\mathbf{\mu-basis}$ of the rational surface parametrization \autoref{rational_surface}. 
		Here $[\mathbf{p},\mathbf{q},\mathbf{r}]$ is defined as the outer product
		$$
		[\mathbf{p},\mathbf{q},\mathbf{r}]=\left(
		\left|
		\begin{array}{ccc}
		p_2 & p_3 & p_4 \\
		q_2 & q_3 & q_4 \\
		r_2 & r_3 & r_4 
		\end{array}
		\right|
		,-
		\left|
		\begin{array}{ccc}
		p_1 & p_3 & p_4 \\
		q_1 & q_3 & q_4 \\
		r_1 & r_3 & r_4 
		\end{array}
		\right|,
		\left|
		\begin{array}{ccc}
		p_1 & p_2 & p_4 \\
		q_1 & q_2 & q_4 \\
		r_1 & r_2 & r_4 
		\end{array}
		\right|
		,-
		\left|
		\begin{array}{ccc}
		p_1 & p_2 & p_3 \\
		q_1 & q_2 & q_3 \\
		r_1 & r_2 & r_3 
		\end{array}
		\right|		
		\right).
		$$	
	\end{definition} 
	
	As noted before, there is a $1-1$ relation between a $\mu$-basis and a basis for the syzygy module $\Syz(a_1,a_2,a_3,a_4)$ given in the following form:
	\begin{compactitem}
		\item \cite[Corollary 3.1]{MU_BASIS} $\mathbf{p}$, $\mathbf{q}$ and $\mathbf{r}$ form a $\mu$-basis if and only if $\mathbf{p}$, $\mathbf{q}$ and $\mathbf{r}$ are a basis of $\Syz(a_1, a_2, a_3, a_4)$.
	\end{compactitem}

	For any vector $\mathbf{v} \in R^m$ we denote its degree by $\deg(\mathbf{v})=\max_{j}\{\deg(\mathbf{v}_j)\}$, where $\deg(\mathbf{v}_j)$ is equal to the total degree of the polynomial $\mathbf{v}_j$ in the variables $s$ and $t$.
 	
	\begin{definition}
		$\mathbf{p}$, $\mathbf{q}$ and $\mathbf{r}$ are said to form a \textbf{minimal $\mathbf{\mu}$-basis} of the rational surface \autoref{rational_surface} if  among all the triples satisfying \autoref{equation_MU_BASIS}, $\deg(\mathbf{p})+\deg(\mathbf{q})+\deg(\mathbf{r})$ is the smallest.
	\end{definition}
	
	In \cite{MU_BASIS} there are left several questions of interest for further research and better understanding. Here we try to address the question:
	\begin{compactitem}
		\item \textbf{What can be said about the degrees of the polynomials in a minimal $\mathbf{\mu}$-basis?}
	\end{compactitem}
	that was asked in \cite[Section 5, second question]{MU_BASIS}.
	
	Due to the equivalence between being a $\mu$-basis and being a basis for the syzygy module, this question is the same as finding an upper bound for the latter one.
	We remark that the problem of studying the degrees of the syzygies of an ideal or a module has attracted the attention of several researchers (see e.g. \cite{LAZARD_ALG_LIN, LAZARD_UPPER_BOUND_NOTE, SYZ_COMP,YAP_LOWER, PEEVA_STURM,AVARAM_LUCH_CONC}).
		
	Another interesting feature of the $\mu$-bases is that they form the linear part of the moving curve/surface ideals in the case of curves/surfaces. 
	In \cite{COX_MOVING}, it was noticed that computing the moving curve/surface ideal is the same as determining the defining equations of the Rees algebra of the ideal generated by the parametrization of the curve/surface.
	The problem of finding the presentation of the Rees algebra is a long standing problem in commutative algebra and algebraic geometry that is a very active research topic (see e.g. \cite{VASC_EQ_REES, COX_REES_MU_1, HONG_SIMIS_VASC_ELIM, LAURENT, CARLOS_MONOID, CARLOS_MU2, CARLOS_MONO,  KPU_NORMAL_SCROLL, KPU_GOR3, DMOD, SAT_SPEC_FIB}).

	\medskip

	In the following construction we homogenize the ideal $I = (a_1,a_2,a_3,a_4) \subset R$ defined by the parametrization \autoref{rational_surface}.

	\begin{construction}
		Given the data $\{a_1,a_2,a_3,a_4\}$ that determines \autoref{rational_surface}, then we define the homogeneous ideal $\II=(b_1,b_2,b_3,b_4)$ with generators
		\begin{equation}
		\label{homogenization}
		b_i(s,t,u) = u^d a_i(\frac{s}{u}, \frac{t}{u}) \in \RR[s,t,u],
		\end{equation}	
		where $d=\max(\deg(a_1), \deg(a_2), \deg(a_3), \deg(a_4))$.
	\end{construction}

	The main result of this paper is the following theorem where we find upper bounds for the degrees of the elements in a minimal $\mu$-basis.
	
	\begin{headthm}[\autoref{result_pd_2}]
		\label{Main_Thm}
		Let $P(s,t)$ be the parametrization in \autoref{rational_surface} and $d$ be the number
		 $$
		 d=\max\{\deg(a_1),\deg(a_2),\deg(a_3),\deg(a_4)\}.
		 $$
		 Then, the following statements hold:
		 \begin{enumerate}[(i)]
			\item There exists a $\mu$-basis with polynomials bounded in the order of $O(d^{33})$.
			\item If the homogenized ideal $\II$ (obtained in \autoref{homogenization}) has height $\HT(\II)=3$, then there exists a $\mu$-basis with degree bounded by $O(d^{22})$.
			\item If the homogenized ideal $\II$ is a ``general'' Artinian almost complete intersection (i.e, like in \autoref{general_form}), then there exists a $\mu$-basis with degree bounded by $O(d^{12})$.
			\item If the homogenized ideal $\II$ has projective dimension $\pd(\II)=1$, then there exists a $\mu$-basis with degree bounded by $d$.
		 \end{enumerate}
	\end{headthm}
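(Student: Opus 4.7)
The plan is to work with the homogenization $\II \subset S := \RR[s,t,u]$, whose minimal graded free resolution has length at most $2$ by the Hilbert Syzygy Theorem. Any homogeneous syzygy of $\II$ dehomogenizes via $u \mapsto 1$ to a syzygy of $I = (a_1,a_2,a_3,a_4)$ of the same total degree, so controlling the generator degrees of $\Syz(\II)$ suffices to control the degrees of a generating set of $\Syz(a_1,a_2,a_3,a_4)$. A separate argument, appealing to Quillen--Suslin on $R = \RR[s,t]$, forces $\Syz(a_1,a_2,a_3,a_4)$ to be a free module of rank $3$; passing from three dehomogenized generators to a $\mu$-basis then requires a unimodular completion step, which one must show enlarges degrees only polynomially in $d$.

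For part (iv), when $\pd(\II) = 1$, the Hilbert--Burch theorem gives a minimal graded resolution $0 \to S^3 \xrightarrow{\phi} S^4 \to \II \to 0$ whose maximal minors recover $b_1,\dots,b_4$ up to a unit. If $d_j$ denotes the degree of the $j$-th column of $\phi$, then $d_1+d_2+d_3 = d$, so each column dehomogenizes to a syzygy of degree at most $d$; these three already form a basis of $\Syz(a_1,a_2,a_3,a_4)$, producing the linear bound.

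For parts (i)--(iii) I would proceed via liaison. The plan is to select a regular sequence of low-degree elements inside $\II$ of maximal length $\HT(\II)$, link $\II$ to a residual ideal $J$, and then recover a resolution of $\II$ from that of $J$ via a mapping-cone construction involving the Koszul complex on the chosen regular sequence. Each linkage step contributes a polynomial factor in $d$ to the syzygy degrees. For case (iii), a single linkage of the Artinian almost complete intersection reaches a Gorenstein residual whose syzygies are governed by the Buchsbaum--Eisenbud structure theorem, producing the $O(d^{12})$ bound. For case (ii), $\HT(\II)=3$ makes $\II$ essentially Artinian after a generic change of coordinates, and a slightly longer chain of reductions yields $O(d^{22})$. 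For case (i) the height of $\II$ may be only $2$, so an additional lifting step---adjoining a general element of $\II$ of degree $O(d)$ to raise the height, then invoking case (ii)---pays one more polynomial factor and reaches $O(d^{33})$.

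The main obstacle will be propagating the degree estimates through the liaison steps while guaranteeing both that each regular sequence used has degree $O(d)$ and that the mapping-cone construction does not introduce uncontrolled cancellations. A related difficulty is the final Quillen--Suslin completion: one has to argue that extending three dehomogenized syzygies to an actual basis of the free rank-$3$ module $\Syz(a_1,a_2,a_3,a_4)$ can be realized by a unimodular matrix of polynomially bounded degree, which is precisely where the constructive unimodular-matrix machinery listed in the keywords enters. Once these two obstacles are handled, the four exponents $33, 22, 12, 1$ emerge from a careful bookkeeping of how many linkage and lifting steps each case demands.
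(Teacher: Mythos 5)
Your part (iv) matches the paper exactly (Hilbert--Burch, degrees of the three columns sum to $d$, dehomogenize). Your overall skeleton --- homogenize, bound the graded resolution of $\II$, dehomogenize, then pay a polynomial factor for a constructive Quillen--Suslin completion --- is also the paper's skeleton. But two of the concrete steps you propose are wrong or incoherent, and they are the ones that actually produce the exponents.

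First, the plan for the general case (i) does not work. You propose to ``adjoin a general element of $\II$ of degree $O(d)$ to raise the height and then invoke case (ii).'' Adding an element of $\II$ to $\II$ does not change the ideal, hence does not change its height; to raise the height you would need to add elements outside $\II$, which changes the ideal and severs any direct connection between its syzygies and $\Syz(a_1,a_2,a_3,a_4)$. The paper does not do any such lifting. Instead it bounds $\reg(\II)\le 3d-2$ directly from $\gcd=1$ (which only forces $\grade(\II)\ge 2$, not $\HT(\II)=3$), and then bounds $\beta_2(\II)\le H_{\II}(3d-2)\le\binom{3d}{2}=O(d^2)$ by an explicit Koszul-complex computation of $\Tor_2^S(\II,\RR)_p$ in three variables (Proposition~\ref{bound_graded_betti}). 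Plugging $\gamma_1,\gamma_2=O(d)$ and $\beta_2=O(d^2)$ into the effective-completion bound of \cite{SERRE_ART} --- roughly $\gamma_1\beta_2\cdot D^{10}$ with $D=\beta_2(1+\gamma_2)$ --- is what yields $O(d^{33})$. Case (ii) then improves this purely because the liaison--to--Gorenstein argument together with Diesel's bound on generators of height-3 Gorenstein ideals forces $\beta_2(\II)\le 2d-1=O(d)$, hence $D=O(d^2)$ and $O(d^{22})$; case (iii) uses the explicit resolution of a general Artinian almost complete intersection from \cite{ROIG} to get $\gamma_2=2$, $\beta_2=d$ and $O(d^{12})$. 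So the exponent differences come entirely from better bounds on $\beta_2$ and $\gamma_2$ fed into a fixed completion formula, not from a longer or shorter chain of linkages.

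Second, the description of the Quillen--Suslin step as ``extending three dehomogenized syzygies to a basis'' misidentifies where the unimodular completion is applied. After dehomogenizing the length-two resolution one gets a presentation $0\to R^{\beta_2}\xrightarrow{F} R^{\beta_2+3}\xrightarrow{G}\Syz(a_1,a_2,a_3,a_4)\to 0$; since the target is free this sequence splits, so the $(\beta_2+3)\times\beta_2$ relation matrix $F$ is unimodular. The effective completion is applied to $F$, producing an invertible $N$ with $F=N\left[\begin{smallmatrix}I_{\beta_2}\\ 0\end{smallmatrix}\right]$, and the basis is $GNe_{\beta_2+1}, GNe_{\beta_2+2}, GNe_{\beta_2+3}$. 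There is no step of completing ``three distinguished syzygies''; in general the dehomogenized resolution presents $\Syz(a_1,a_2,a_3,a_4)$ by $\beta_2+3$ generators, none of which need be part of a basis.
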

	
	The proof of  \autoref{Main_Thm} is based on two fundamental ingredients. 
	By using techniques coming from homological and commutative algebra we bound numerical invariants of the minimal free resolution (e.g. regularity and Betti numbers) of the ideal $\II$ obtained by homogenizing the ideal $I=(a_1,a_2,a_3,a_4)$, and then a process of dehomogenization gives us a presentation of $\Syz(a_1,a_2,a_3,a_4)$ where everything can be bounded in terms of $d$.
	Under the assumptions of working over an infinite field $\RR$ and having a presentation of $\Syz(a_1,a_2,a_3,a_4)$, then we apply the remarkable results     of \cite{SERRE_ART} where an effective version of the Quillen-Suslin Theorem is given.
	
	In the part $(iii)$ we use an explicit description of the minimal free resolution of a general Artinian almost complete intersection, that was obtained in \cite{ROIG}. 
	The part $(iv)$ follows from \cite{STRONG_MU_BASIS} where the case $\pd(\II)=1$ was studied and called \textit{strong $\mu$-basis}.
	
	In contrast to our results, the elements of a $\mu$-basis of a parametric rational curve of degree $d$ are bounded in degree by exactly $d$.
	This big difference between the case of curves and surfaces comes from the fact that the syzygy module of the homogenized ideal may not be free in the case of surfaces but in the case of curves is always free.
	Actually, the condition of $(iv)$ accounts to say that the syzygy module of $\II$ is free, and the case of a parametric rational surface having a strong $\mu$-basis is treated similarly to the case of rational curves.
	In the general case where the syzygy module of $\II$ is not free, then the dehomogenization process that we use does not give us a basis of $\Syz(a_1,a_2,a_3,a_4)$. 
	To overcome this difficulty, we use the effective version of Quillen-Suslin Theorem in \cite{SERRE_ART}, and it is in this last step  where the complexity of our upper bounds becomes large.
	As a general opinion, we think that our upper bounds are not sharp.
	
	\medskip
	
	In our proof of \autoref{Main_Thm} we needed to find some upper bounds for the regularity and Betti numbers of the homogeneous ideal $\II$.
	Since we think that these auxiliary upper bounds may be of interest on their own, we worked with more general ideals and obtained the following results:
	\begin{enumerate}[(i)]
		\item Let $\kk$ be an arbitrary field. For a homogeneous ideal $J=(f_1,f_2,\ldots,f_m) \subset \kk[s,t,u]$ generated by $m\ge 2$ relatively prime polynomials in $\kk[s,t,u]$, in \autoref{bounds} we give upper bounds for the regularity and the Betti numbers of $J$.  
		\item For a homogeneous ideal $J=(g_1,g_2,g_3,g_4) \subset \RR[s,t,u]$ with $\deg(g_1)=\cdots=\deg(g_4)$ and $\HT(I)=3$, in \autoref{HH3} we improve the upper bounds for the Betti numbers of $J$.
	\end{enumerate}

\medskip

The  basic outline of this paper is as follows.
In \autoref{section2}, we study the syzygies of ideals in a polynomial ring, and in particular we show that $\Syz(a_1,a_2,a_3,a_4)$ is a free module of rank $3$.
In \autoref{section3}, we compute upper bounds for the regularity and the Betti numbers of ideals generated by relatively prime polynomials in three variables.
In \autoref{section4}, by applying the effective version of Quillen-Suslin Theorem in \cite{SERRE_ART}, we prove \autoref{Main_Thm}.
In \autoref{section5}, we briefly discuss the sharpness of our upper bounds.
In \autoref{section6}, we give a simple example to show the process of computing $\mu$-bases with our method.

\medskip

Finally, for the sake of completeness we recall some basic definitions that will be used.
For notational purposes, let $M$ be an $R$-module and $J\subset R$ be an ideal.
The \textit{projective dimension} of $M$, denoted by $\pd(M)$, is the smallest possible length of a projective resolution of $M$ (see \cite[page 233]{ROTMAN}).
The \textit{height} of $J$, denoted by $\HT(J)$, is equal to $\HT(J)=\inf\{\HT(\pp) \mid  I \subset \pp \in \Spec(R) \}$, where the height of a prime ideal $\pp$ is the maximum of the lengths of increasing chains of prime ideals contained in $\pp$  (see \cite[Section 5]{MATSUMURA}).
The \textit{grade} of $J$, denoted by $\grade(J)$, is the maximum of the lengths of the regular sequences contained in $J$ (see \cite[Definition 1.2.6]{BRUNS_HERZ}).

Assume in addition that $M$ is a finitely generated graded $R$-module.
The $k$-th graded component of $M$ is denoted by $M_k$.
The \textit{Hilbert function} of $M$, denoted by $H_M(k)$, is equal to $\dim_{\RR}(M_k)$.
The \textit{minimal free resolution} of $M$ is unique up to isomorphism (see \cite[Theorem 7.5]{GRADED}), then as a consequence, we can define the \textit{Betti numbers} of $M$ (see \cite[Section 11]{GRADED}) and the \textit{regularity} of $M$ (see \cite[Section 18]{GRADED}).
	
	\section{Dealing with syzygies}\label{section2}
	
	The fact that $\Syz(a_1,a_2,a_3,a_4)$ is a free module of rank $3$ is an important step in \cite{MU_BASIS} to show the existence of a $\mu$-basis. In this section we give a different proof for that statement, which we also generalize because we will need the case of three variables after homogenizing the ideal $I=(a_1,a_2,a_3,a_4)$. 
	
	In this section we use the following notation.
	
	\begin{notation}
		Let $\kk$ be an arbitrary field and $R$ be the polynomial ring $R=\kk[x_1,\ldots,x_n]$ where $n\ge 2$.
	\end{notation}
	
	\begin{theorem}
		\label{free_syzygy}
		Let $I$ be an ideal in $R$. Then, for any projective resolution 
		$$\cdots \xrightarrow{d_{n}} P_{n-1} \xrightarrow{d_{n-1}} P_{n-2} \xrightarrow{d_{n-2}} P_{n-3} \xrightarrow{d_{n-3}} \cdots \xrightarrow{d_1} P_0 \xrightarrow{d_0} I \rightarrow 0
		$$
		where the $P_i$'s are finitely generated, the corresponding \textit{(n-2)-th} syzygy $K_{n-2}=\Ker(d_{n-2})$ is free.
	\end{theorem}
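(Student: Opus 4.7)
The plan is to combine two classical inputs: \emph{Hilbert's Syzygy Theorem}, which bounds the length of a projective resolution over a polynomial ring, and the \emph{Quillen-Suslin Theorem}, which forces finitely generated projective modules over $R = \kk[x_1,\ldots,x_n]$ to be free. The entire argument then reduces to checking that the index $n-2$ in the statement is precisely the index at which Hilbert's bound begins to force projectivity.

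First, since $R$ is Noetherian by Hilbert's Basis Theorem, $I$ is finitely generated; applying Hilbert's Syzygy Theorem to $R/I$ gives $\pd_R(R/I) \leq n$, and the short exact sequence $0 \to I \to R \to R/I \to 0$ then yields $\pd_R(I) \leq n-1$. Next, I would invoke the standard characterization of projective dimension in terms of syzygies: if $\pd_R(M) \leq k$, then for every projective resolution $P_\bullet \to M$ with finitely generated terms the kernel $\Omega^{k}M = \Ker(P_{k-1} \to P_{k-2})$ is automatically projective (this can also be reached by iterated application of Schanuel's lemma, comparing $P_\bullet$ with any finite projective resolution of $M$ of length $\leq k$). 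In our setting $K_{n-2} = \Ker(d_{n-2})$ is exactly the $(n-1)$-th syzygy $\Omega^{n-1}(I)$, so taking $k = n-1$ shows that $K_{n-2}$ is projective. Noetherianity of $R$ ensures that $K_{n-2}$ is finitely generated, since it is a submodule of the finitely generated module $P_{n-2}$.

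Finally, I would apply the Quillen-Suslin Theorem to the finitely generated projective $R$-module $K_{n-2}$ to conclude that it is free, which is the desired statement. There is no substantive computational obstacle; the only point requiring care is the indexing, since $K_{n-2}$ sits precisely at the position where Hilbert's bound is saturated, so shifting the index down by one would break the argument in general. In this sense the theorem is tight and relies essentially on both ingredients: Hilbert's bound cannot give freeness at any earlier syzygy, and Quillen-Suslin is what upgrades the projective conclusion to a free one.
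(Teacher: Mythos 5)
Your proof is correct and follows essentially the same line of reasoning as the paper: bound projective dimension via Hilbert's Syzygy Theorem, deduce projectivity of $K_{n-2}$ by a Schanuel-type comparison, and upgrade projective to free by the Quillen--Suslin Theorem. The only superficial difference is that you package the Schanuel step as the standard ``$\pd(M)\le k$ implies $\Omega^k M$ projective'' characterization applied to $I$, whereas the paper applies the generalized Schanuel lemma directly to two length-$n$ resolutions of $R/I$; the indexing $K_{n-2}=\Omega^{n-1}(I)$ is handled correctly in both.
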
	
	\begin{proof}	
		By the Hilbert Syzygy Theorem \cite[Corollary 9.36]{ROTMAN}, there exists a finite free resolution of length at most $n$ for the quotient ring $R/I$. 
		We assume that it has length $n$ and we denote it by 
		$$
		0 \rightarrow F_n \rightarrow F_{n-1} \rightarrow \cdots \rightarrow F_0 \rightarrow R/I \rightarrow 0,
		$$
		because if it has length smaller than $n$ then we can simply fill it with zero modules.
		
		With the given projective resolution of $I$ we get the exact sequence 
		$$
		0 \rightarrow K_{n-2} \rightarrow P_{n-2} \xrightarrow{d_{n-2}} P_{n-3} \xrightarrow{d_{n-3}} \cdots \xrightarrow{d_1} P_0 \xrightarrow{d_0} R \rightarrow R/I \rightarrow 0,
		$$
		where $K_{n-2}=\Ker(d_{n-2})$ is the \textit{(n-2)-th} syzygy. Then, from the generalized Schanuel Lemma (see \cite[Theorem 189]{KAP})  we have the isomorphism 
		$$
		K_{n-2} \,\bigoplus\,
 \left(\bigoplus_{j=0}^{\lfloor\frac{n-1}{2}\rfloor} F_{n-1-2j} \right)\,\bigoplus\, \left(\bigoplus_{j=0}^{\lfloor\frac{n-3}{2}\rfloor} P_{n-3-2j} \right) \;\cong\; \left(\bigoplus_{j=0}^{\lfloor\frac{n}{2}\rfloor} F_{n-2j} \right) \,\bigoplus\, \left(\bigoplus_{j=0}^{\lfloor\frac{n-2}{2}\rfloor} P_{n-2-2j} \right),
		$$
		which implies that $K_{n-2}$ is a projective module. Since $R$ is Noetherian and $P_{n-2}$ is finitely generated, then $P_{n-2}$ is Noetherian and $K_{n-2} \subset P_{n-2}$ is finitely generated. Finally, the Quillen-Suslin Theorem \cite[Theorem 4.59]{ROTMAN} implies that the module $K_{n-2}$ is free.
	\end{proof} 
	
	\begin{corollary}
		\label{pd_ineq_ideal}
		For any ideal $I \subset R$ we have $\pd(I) \le n-1$.
		\begin{proof}
			Using that $R$ is Noetherian, for the ideal $I$ we can always find a free resolution composed of finitely generated modules.
			So the corollary follows from \autoref{free_syzygy}.
		\end{proof}
	\end{corollary}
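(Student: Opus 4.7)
The plan is to deduce this directly from \autoref{free_syzygy} together with the Noetherian hypothesis on $R$. Since $R = \kk[x_1,\ldots,x_n]$ is a Noetherian ring and the ideal $I \subset R$ is finitely generated, I can build a free resolution
\[
\cdots \xrightarrow{d_{n-1}} F_{n-1} \xrightarrow{d_{n-2}} F_{n-2} \xrightarrow{d_{n-3}} \cdots \xrightarrow{d_0} I \to 0
\]
in which every $F_i$ is a finitely generated free $R$-module. Indeed, one chooses $F_0$ free on a finite generating set of $I$, takes $F_1$ free on a finite generating set of the kernel (which exists because $F_0$ is Noetherian), and proceeds inductively.

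With such a resolution in hand, \autoref{free_syzygy} applies verbatim and tells me that the $(n-2)$-th syzygy $K_{n-2} = \Ker(d_{n-2})$ is free. Replacing the tail of the resolution with this single free module, I obtain a finite free resolution
\[
0 \to K_{n-2} \to F_{n-2} \to F_{n-3} \to \cdots \to F_0 \to I \to 0
\]
of length at most $n-1$. Therefore $\pd(I) \le n-1$.

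There is essentially no obstacle here: the substantive content is packaged inside \autoref{free_syzygy} (which in turn combines the Hilbert Syzygy Theorem, the generalized Schanuel lemma, and Quillen--Suslin). The only thing the corollary adds is the observation that for an ideal we may always choose the ambient modules of the resolution to be finitely generated, so the hypotheses of \autoref{free_syzygy} are met automatically.
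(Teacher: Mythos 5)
Your proposal is correct and follows essentially the same route as the paper: construct a free resolution of $I$ by finitely generated free modules (possible since $R$ is Noetherian), invoke \autoref{free_syzygy} to conclude the $(n-2)$-th syzygy is free, and truncate to obtain a finite free resolution of length at most $n-1$. You merely spell out the truncation step that the paper leaves implicit.
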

	
	We finish this section by proving that the free module $\Syz(a_1,a_2,a_3,a_4)$ has rank $3$.
	
	\begin{lemma}
		\label{Euler_char}
		Let $A$ be a Noetherian ring, and $I \subset A$ be a nonzero ideal with a finite free resolution. Then $\rank(I)=1$.
		\begin{proof}
			From $0\rightarrow I \rightarrow A \rightarrow A/I \rightarrow 0$ and the additivity of the $\rank$ function \cite[Proposition 1.4.5]{BRUNS_HERZ}, we obtain $\rank(A)=\rank(I)+\rank(A/I)$.
			We always have $\rank(A)=1$ and from  \cite[Theorem 195]{KAP} we get $\rank(A/I)=0$. Therefore $\rank(I)=1$.
		\end{proof}	
	\end{lemma}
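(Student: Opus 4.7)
The natural approach is to apply additivity of rank on short exact sequences (\cite[Proposition 1.4.5]{BRUNS_HERZ}) to the defining sequence
$$0 \to I \to A \to A/I \to 0.$$
Since $\rank(A) = 1$, this immediately yields $\rank(I) = 1 - \rank(A/I)$, so the lemma reduces to proving the single equation $\rank(A/I) = 0$.

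The hypothesis that $I$ admits a finite free resolution enters exactly at this point. Splicing that resolution with the sequence above produces a finite free resolution of $A/I$, so $A/I$ itself has finite projective dimension while being annihilated by the nonzero ideal $I$. The relevant classical fact is that a finitely generated module over a Noetherian ring which admits a finite free resolution and has a nonzero annihilator must have rank zero; this is precisely \cite[Theorem 195]{KAP}. Morally, one localizes at each minimal prime $\pp$ of $A$, uses that finite projective dimension over the zero-dimensional local Noetherian ring $A_\pp$ forces freeness of $(A/I)_\pp$, and then observes that a cyclic free module equals either $0$ or $A_\pp$, with the latter case being incompatible with the existence of a nonzero annihilator.

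The only genuine obstacle is invoking this Kaplansky-style rank computation; the rest of the argument is formal bookkeeping with ranks. An alternative route would be to compute the Euler characteristic $\sum_i (-1)^i \rank(F_i)$ of a finite free resolution of $A/I$ and show that it vanishes whenever the resolved module has a nonzero annihilator, but citing \cite[Theorem 195]{KAP} is more direct and avoids reproving a standard result.
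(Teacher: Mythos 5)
Your proposal is correct and follows essentially the same route as the paper: apply additivity of rank to $0 \to I \to A \to A/I \to 0$, note $\rank(A)=1$, and invoke \cite[Theorem 195]{KAP} to get $\rank(A/I)=0$. The extra commentary you provide (splicing the resolution, localizing at minimal primes) is a helpful unpacking of why the cited theorem applies, but it does not change the argument.
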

	
	From the Hilbert Syzygy Theorem we know that any finitely generated module over $R$ has a finite free resolution, so we are free to apply the previous \autoref{Euler_char} in this case. 
	
	\begin{corollary}
		\label{syzygy_rank}
		Let $I=(f_1,f_2,\ldots,f_m)$ be an ideal in $R$ with $\pd(I)=1$. Then $\Syz(f_1,f_2,\ldots,f_m)$ is a free module of rank $m-1$.
		\begin{proof}
			The hypothesis $\pd(I)=1$ implies that the ideal $I$ has a projective resolution of the form
			$$
			0 \rightarrow P_1 \rightarrow P_0 \rightarrow I \rightarrow 0.
			$$
		
			To study the module $\Syz(f_1,f_2,\ldots,f_m)$, we consider the short exact sequence 
			\begin{equation}
				\label{EQ_SYZ_SHORT}
				0 \rightarrow\Syz(f_1,f_2,\ldots,f_m) \rightarrow R^m \rightarrow I \rightarrow 0.
			\end{equation}
			Then, the Schanuel lemma (see e.g. \cite[page 841, Lemma 2.4]{LANG_ALGEBRA}) yields the following isomorphism 
			$$
			\Syz(f_1,f_2,\ldots,f_m) \oplus P_0 \;\cong\; P_1 \oplus R^m,
			$$
			which implies that $\Syz(f_1,f_2,\ldots,f_m)$ is a projective module. 
			Since $\Syz(f_1,f_2,\ldots,f_m)$ is also finitely generated, then the Quillen-Suslin Theorem \cite[Theorem 4.59]{ROTMAN} implies that $\Syz(f_1,f_2,\ldots,f_m)$ is actually free.
			Finally, the short exact sequence \autoref{EQ_SYZ_SHORT}, the additivity of the rank function and \autoref{Euler_char} give us 
			$$
			\rank\left(\Syz(f_1,f_2,\ldots,f_m)\right)=\rank(R^m) - \rank(I) = m -1,
			$$
			and so the claim of the corollary follows.
		\end{proof}
	\end{corollary}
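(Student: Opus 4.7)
The plan is to combine the hypothesis $\pd(I)=1$ with the Quillen-Suslin theorem via Schanuel's lemma, mirroring on a shorter resolution the strategy already used in \autoref{free_syzygy}. First I would write down the tautological presentation
\[
0 \to \Syz(f_1,\ldots,f_m) \to R^m \to I \to 0,
\]
coming from the surjection that sends the standard basis of $R^m$ to the generators $f_1,\ldots,f_m$. The assumption $\pd(I)=1$ supplies a second short projective resolution $0 \to P_1 \to P_0 \to I \to 0$, and by Noetherianness of $R$ together with finite generation of $I$ we may take $P_0$ and $P_1$ finitely generated.

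Next I would invoke Schanuel's lemma on these two resolutions of $I$ to obtain an isomorphism
\[
\Syz(f_1,\ldots,f_m) \,\oplus\, P_0 \;\cong\; P_1 \,\oplus\, R^m.
\]
The right-hand side is projective, hence so is the direct summand $\Syz(f_1,\ldots,f_m)$. Since $\Syz(f_1,\ldots,f_m)$ sits inside the Noetherian module $R^m$, it is also finitely generated, so the Quillen-Suslin theorem upgrades projective to free.

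For the rank I would apply additivity of $\rank$ to the tautological short exact sequence, obtaining $\rank\!\left(\Syz(f_1,\ldots,f_m)\right) = \rank(R^m) - \rank(I) = m - \rank(I)$. The Hilbert Syzygy Theorem guarantees that $I$ admits a finite free resolution, so \autoref{Euler_char} applies and yields $\rank(I)=1$, giving the claimed value $m-1$.

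No individual step is a serious obstacle; the argument is essentially bookkeeping around Schanuel's lemma, the Quillen-Suslin theorem, and the additivity of rank. The only subtlety worth flagging is that one must verify the finite generation hypothesis before applying Quillen-Suslin, which is why the inclusion $\Syz(f_1,\ldots,f_m) \subset R^m$ and the Noetherianness of $R$ enter the proof explicitly.
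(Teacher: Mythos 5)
Your proposal is correct and follows essentially the same route as the paper: the tautological presentation of $I$, Schanuel's lemma against a length-one projective resolution, Quillen--Suslin to upgrade projective to free, and additivity of rank together with \autoref{Euler_char} to compute the rank. The extra remarks about finite generation and Noetherianness are just making explicit what the paper leaves implicit.
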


	\section{Upper bounds for the regularity and the Betti numbers}\label{section3}
	
	This section is devoted to finding an upper bound for the regularity and the Betti numbers of $\II$. 
	Since the results of this section could be of general interest, we will deal with the case of homogeneous ideals generated by relatively prime polynomials in three variables.
	Our main reference will be the chapter \textit{``Graded Free Resolutions''} of \cite{GRADED}.
	
	First we note that the homogenized ideal $\II$ satisfies the condition of being generated by relatively prime polynomials.
	\begin{lemma}
		For the ideal $\hat{I}=(b_1,b_2,b_3,b_4)$ constructed in \autoref{homogenization}
		we have that $b_1,b_2,b_3,b_4$ are relatively prime (i.e. $\gcd(b_1,b_2,b_3,b_4)=1$).
		\begin{proof}
			Suppose that $g = \gcd(b_1,b_2,b_3,b_4) \neq 1\in \RR[s,t,u]$. 
			Since $g(s,t,1) \mid a_i(s,t)$ and $\gcd(a_1,a_2,a_3,a_4)=1$, then we necessarily have that $g \in \kk[u]$.
			By construction one of the $b_i$'s has a term that is free of $u$, without loss of generality we assume that $b_1(s,t,u) = \lambda s^{\alpha}t^{d-\alpha} + u p(s,t,u)$ with $p \in \RR[s,t,u]$ and $\lambda\neq 0$.
			So, since $b_1$ is homogeneous of degree $d$, we have that $g \mid b_1$ is a contradiction.
		\end{proof}
	\end{lemma}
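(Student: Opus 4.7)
The plan is a short argument by contradiction. Suppose $g := \gcd(b_1,b_2,b_3,b_4)$ is not a unit in $\RR[s,t,u]$. Since each $b_i$ is homogeneous of degree $d$ by the construction in \autoref{homogenization}, the gcd $g$ is also homogeneous, and I may assume $\deg(g) \ge 1$.

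The decisive step is to specialize $u = 1$. The homogenization formula $b_i(s,t,u) = u^d a_i(s/u, t/u)$ gives $b_i(s,t,1) = a_i(s,t)$, so $g(s,t,1) \in \RR[s,t]$ divides every $a_i$ and hence divides $\gcd(a_1,a_2,a_3,a_4) = 1$. Therefore $g(s,t,1)$ is a nonzero constant. For a homogeneous polynomial in $\RR[s,t,u]$ this forces $g = c\, u^e$ with $c \in \RR^\times$ and $e = \deg(g) \ge 1$; in particular $u$ divides every $b_i$.

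To obtain a contradiction I would invoke the maximality $d = \max_i \deg(a_i)$: choose an index $i$ with $\deg(a_i) = d$, so that $a_i$ contains a monomial $\lambda s^\alpha t^{d-\alpha}$ with $\lambda \neq 0$ and $0 \le \alpha \le d$. Homogenizing this monomial to degree $d$ leaves it unchanged (no $u$ factor is inserted), so the same monomial appears in $b_i$, and consequently $u \nmid b_i$. This contradicts the previous paragraph and forces $g$ to be a unit.

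There is no serious obstacle in this argument; the only point that must be handled cleanly is the final one, namely that the hypothesis $d = \max_i \deg(a_i)$ is \emph{exactly} what guarantees at least one generator $b_i$ has a monomial free of $u$. Without this hypothesis, all of the $b_i$'s could share a factor of $u$, which is precisely the obstruction being ruled out.
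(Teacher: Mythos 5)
Your proposal is correct and follows essentially the same route as the paper: specialize $u=1$ to see that $g(s,t,1)$ divides $\gcd(a_1,\ldots,a_4)=1$, deduce that $g$ must be a power of $u$, and contradict this by pointing to a generator $b_i$ of top degree $d$, which retains a monomial in $s,t$ only. Your write-up is a bit more explicit than the paper's (e.g., in showing that $g(s,t,1)$ constant and $g$ homogeneous force $g = c\,u^e$, and in pinpointing the index $i$ with $\deg(a_i)=d$), but the underlying argument is the same.
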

	
	During the present section we use the following notation.
	\begin{notation}
		Let $\kk$ be an arbitrary field and $\RR$ be an infinite field.
		Let $T$ and $S$ be the polynomial rings $T=\kk[s,t,u]$ and $S=\RR[s,t,u]$.
	\end{notation}

	We divide the section into two different parts.
	In the first part, we consider a homogeneous ideal $J=(f_1,f_2,\ldots,f_m) \subset T$
	 generated by $m\ge 2$ relatively prime polynomials.
	In the second part, we deal with the special case of an ideal $J = (g_1,g_2,g_3,g_4) \in S$ with $\deg(g_1)=\cdots=\deg(g_4)$ and $\HT
	(J)=3$.
		
	\begin{theorem}
		\label{bounds}
		Let $m\ge 2$, $J=(f_1,f_2,\ldots,f_m) \subset T$  be a homogeneous ideal, $\gcd(f_1,\ldots,f_m)=1$ and $\deg(f_1),\ldots,\deg(f_m) \le d$. 
		Then, the following statements hold:
		\begin{enumerate}[(i)]
			\item $\reg(J) \le 3d-2.$
			\item 
			$
			\beta_1(J) \le \beta_2(J) + m -1.
			$
			\item 
			$
			\beta_2(J)  \le H_J(\reg(J)) \le H_J(3d-2) \le \binom{3d}{2}.
			$
			
			In addition if $\deg(f_1)=\deg(f_2)=\ldots=\deg(f_m)=d$ then 
			$
			\beta_2(J) \le m \binom{2d}{2}.
			$			
		\end{enumerate} 
	\end{theorem}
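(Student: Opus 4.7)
The plan is to handle the parts in the order \emph{(ii), (i), (iii)}, as each is progressively more intricate. I may freely assume $\kk$ is infinite throughout, since a flat base change to an infinite extension preserves the Hilbert function, Betti numbers, and regularity.

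\emph{(ii).} By \autoref{pd_ineq_ideal}, $\pd_T(J) \le n - 1 = 2$, so the minimal graded free resolution of $J$ takes the form $0 \to F_2 \to F_1 \to F_0 \to J \to 0$. Since $J$ is a nonzero ideal in the domain $T$, additivity of rank applied to $0 \to J \to T \to T/J \to 0$ yields $\mathrm{rank}(J) = 1$; applying additivity of rank to the resolution then gives the Euler identity $\beta_0(J) - \beta_1(J) + \beta_2(J) = 1$. Combining with $\beta_0(J) \le m$ (as $f_1, \ldots, f_m$ generate $J$) yields the claim.

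\emph{(i).} The hypothesis $\gcd = 1$ forces $\HT(J) \ge 2$. I split into two subcases. If $\HT(J) = 3$, prime avoidance provides a regular sequence $g_1, g_2, g_3 \in J$ with each $\deg g_i \le d$, so $T/(g_1, g_2, g_3)$ is a Gorenstein Artinian complete intersection whose top nonzero graded piece sits in degree $\sum_i (\deg g_i - 1) \le 3d-3$; since $T/J$ is a graded quotient, $\reg(T/J) \le 3d-3$ and thus $\reg(J) \le 3d-2$. If $\HT(J) = 2$, I take a regular sequence $g_1, g_2 \in J$ of degrees $\le d$ together with a generic linear form $\ell$ (which serves as a non-zerodivisor on $T/J^{\mathrm{sat}}$) and reduce modulo $\ell$: the result is an Artinian ideal in $\kk[s,t]$ with generators of degree $\le d$, which is bounded by the two-variable analogue of the first subcase. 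The $\mathfrak{m}$-torsion piece $J^{\mathrm{sat}}/J$ is then controlled via the short exact sequence $0 \to J^{\mathrm{sat}}/J \to T/J \to T/J^{\mathrm{sat}} \to 0$. Controlling this torsion is the main obstacle of the argument, and it is here that the slack between $2d-1$ and $3d-2$ is absorbed.

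\emph{(iii).} The rightmost inequality $H_J(3d-2) \le \binom{3d}{2}$ is immediate from $H_J(k) \le \dim_\kk T_k = \binom{k+2}{2}$. The middle inequality $H_J(\reg J) \le H_J(3d-2)$ follows from $\reg(J) \le 3d-2$ together with the fact that above the regularity $H_J$ agrees with its Hilbert polynomial, which is non-decreasing in this range. For the leading inequality $\beta_2(J) \le H_J(\reg J)$, I identify $\beta_2(J) = \beta_3(T/J) = \dim_\kk \mathrm{soc}(T/J)$ via the Koszul complex of $(s,t,u)$ acting on $T/J$, and embed $\mathrm{soc}(T/J)$ into $J_{\reg(J)}$ by multiplication by generic polynomials of complementary degree (using that $T$ is a domain to get injectivity on lifts). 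The refined bound $\beta_2(J) \le m\binom{2d}{2}$ under the equal-degree hypothesis comes from a sharper Koszul estimate that exploits the fact that the first syzygies of $J$ now live in degrees $\ge d+1$, which tightens the shift bound controlling $F_2$.
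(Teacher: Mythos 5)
Your part (ii) matches the paper's argument (additivity of rank across the length-two resolution), and your part (iii) main chain takes a genuinely different route than the paper, which I'll comment on. But there are two concrete problems.

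\paragraph{Part (i), the height-$2$ case is a gap.} Your height-$3$ argument is fine: after passing to an infinite field you can find a homogeneous regular sequence of degrees $\le d$ inside $J$, and the socle degree $\le 3d-3$ of the resulting complete intersection bounds $\reg(T/J)$ from above since $T/J$ is Artinian. But when $\HT(J)=2$, passing to $J^{\mathrm{sat}}$ and slicing by a generic linear form only controls $\reg(T/J^{\mathrm{sat}})$, and, as you yourself note, the $\mm$-torsion $J^{\mathrm{sat}}/J$ is ``the main obstacle'' — you name it and do not close it. The standard identity $\reg(T/J)=\max\bigl(\reg(T/J^{\mathrm{sat}}),\ \reg(H^0_\mm(T/J))\bigr)$ means you must independently bound the top degree of $H^0_\mm(T/J)=J^{\mathrm{sat}}/J$, and nothing in the two-variable reduction controls that piece. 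The paper sidesteps exactly this by invoking a known regularity theorem for ideals with $\dim(T/J)\le 1$: once $\grade(J)\ge 2$ is established via \autoref{lemma_gcd} and \autoref{the_depth} (which gives $\HT(J)\ge 2$ in the Cohen--Macaulay ring $T$, hence $\dim(T/J)\le 1$), it cites \cite[Theorem~1.9.4]{REGULARITY} to get $\reg(T/J)\le 3d-3$ outright, uniformly in the height. What you are sketching is essentially an attempt to reprove that theorem, and the step you flag as an obstacle is precisely its content; as written, (i) is incomplete.

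\paragraph{Part (iii), the refined bound is misdirected.} Your socle argument for $\beta_2(J)\le H_J(\reg J)$ is a legitimate alternative to the paper's: the paper instead bounds $\beta_{2,p}(J)\le H_J(p-2)-H_J(p-3)$ by an explicit analysis of the degree-$p$ piece of the Koszul complex $K(s,t,u)\otimes J$ (a cycle in $\bigwedge^2$ is determined by any one of its three components) and then telescopes. Your route via $\beta_2(J)=\beta_3(T/J)=\dim_\kk\mathrm{soc}(T/J)$ and an embedding $\mathrm{soc}(T/J)\hookrightarrow J_{\reg(J)}$ can be made to work, but ``multiplication by generic polynomials of complementary degree'' is too loose: you need the multipliers to be compatible across degrees (e.g., powers $\ell^{\reg(J)-k}$ of one fixed linear form), and then an induction on the top socle degree shows injectivity; independent generic polynomials need not give a linearly independent image. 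More importantly, the refined bound $\beta_2(J)\le m\binom{2d}{2}$ does \emph{not} come from a ``sharper Koszul estimate'' about first syzygies living in degrees $\ge d+1$ — that observation has no evident connection to $m\binom{2d}{2}$. The paper's (and the correct) move is much more direct: when all $f_i$ have degree $d$, the graded piece $J_p$ is spanned by $\{x^\alpha f_i : \lvert\alpha\rvert=p-d\}$, so $H_J(p)\le m\binom{p-d+2}{2}$, and substituting $p=3d-2$ into the chain $\beta_2(J)\le H_J(\reg J)\le H_J(3d-2)$ you already have gives $\beta_2(J)\le m\binom{2d}{2}$ immediately. You should replace your vague Koszul comment with this one-line Hilbert function estimate.
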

	
	We break the proof of \autoref{bounds} in some steps that now follow.
	First, we prove that any ideal as $J$ above has two relatively prime elements, but in order to prove it we have to make a more complicated reformulation.
	
	\begin{lemma}
		\label{lemma_gcd}
		Let $m \ge 2$ and $f_1, \ldots,f_m \in \kk[x_1,\ldots,x_n]$ be relatively prime polynomials (i.e. $\gcd(f_1,\ldots, f_m)=1$). 
		Then there exists an infinite sequence of polynomials $\{h_i\}_{i=1}^{\infty} \subset (f_1,\ldots,f_m)$, with $\gcd(h_i,h_j)=1$ for $i \ne j$. 
		\begin{proof}
			We proceed by an induction argument on $m$.
     		Fix $m\ge 2$.
			We compute $g=\gcd(f_1,\ldots,f_{m-1})$ and the new polynomials $f_1^{'}=f_1/g, \ldots, f_{m-1}^{'}=f_{m-1}/g$. 
			In the case $m=2$ we have $f_1^{'}=1$, and when $m>2$ we get $\gcd(f_1^{'},\ldots,f_{m-1}^{'})=1$.
			Hence, in both cases, we can obtain an infinite sequence $\{h_i^{'}\}_{i=1}^{\infty} \subset (f_1^{'},\ldots,f_{m-1}^{'})$ with $\gcd(h_i^{'}, h_j^{'})=1$ for $i \ne j$, because when $m=2$ we have $(f_1^{'})=\kk[x_1,\ldots,x_n]$ and when $m>2$ we can use the induction hypothesis.
			
			For each $h_i^{'}$ we have that $\gcd(f_m,f_m+gh_i^{'})=\gcd(f_m,gh_i^{'})$. From $\gcd(f_1,\ldots,f_m)=1$ we conclude that $\gcd(f_m,g)=1$, and for some $j \in \NN$ we should have $\gcd(f_m,gh_j^{'})=1$, because all the $h_i^{'}$'s have different prime factors but $f_m$ can have only a finite amount of prime factors.
			
			Suppose we have computed a sequence of polynomials $h_1,\ldots,h_k$ and a polynomial $g_k$, with the properties $\gcd(h_i,h_j)=1$ for $1\le i < j \le k$ and $\gcd(h_i,g_k)=1$ for $1\le i \le k$.
			Again, for each $h_i{'}$ we have  
			\begin{equation}
				\label{gcd_equation}
				\gcd(h_1\cdots h_k, h_1\cdots h_k + g_kh_i^{'})=\gcd(h_1\cdots h_k, g_kh_i^{'})=\gcd(h_1\cdots h_k+g_kh_i^{'},g_kh_i^{'}),	
			\end{equation}
			and there must exist some $j\in\NN$	with $\gcd(h_1h_2\cdots h_k, g_kh_j^{'})=1$. 
			Thus we define the next elements in the inductive step as 
			$
			h_{k+1}=h_1h_2\cdots h_k + g_kh_j^{'}$ and  $g_{k+1}=g_kh_j^{'}.
			$	
			
			From \autoref{gcd_equation} we have that $\gcd(h_1h_2\cdots h_k,h_{k+1})=\gcd(h_1h_2\cdots h_k,g_{k+1})=\gcd(h_{k+1},g_{k+1})=1$, which implies $\gcd(h_i,h_j)=1$ for $1\le i < j \le k+1$ and $\gcd(h_i,g_{k+1})=1$ for $1\le i \le k+1$.
			Starting with $h_1=f_m,\; g_1=g$ and following this iterative process we can 
			construct the required sequence $\{h_i\}_{i=1}^{\infty} \subset (f_1,\ldots,f_m)$ with $\gcd(h_i,h_j)=1$ for $i \neq j$.
		\end{proof}
	\end{lemma}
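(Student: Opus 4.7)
The plan is to proceed by induction on $m$. For the inductive step, first reduce to a setting where a coprime auxiliary sequence is available: let $g = \gcd(f_1,\ldots,f_{m-1})$ and set $f'_i = f_i/g$ for $1 \le i \le m-1$. When $m = 2$ we have $f'_1 = 1$ so that $(f'_1)$ is the full polynomial ring, and when $m > 2$ the polynomials $f'_1,\ldots,f'_{m-1}$ are relatively prime, so the induction hypothesis yields an infinite pairwise coprime sequence $\{h'_i\}_{i=1}^\infty \subset (f'_1,\ldots,f'_{m-1})$. In either case, multiplying back by $g$ places $gh'_i \in (f_1,\ldots,f_{m-1}) \subset (f_1,\ldots,f_m)$, giving an infinite supply of ``building blocks'' inside the ideal.

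Next I would observe that $\gcd(g, f_m) = 1$: any irreducible factor of $g$ divides every $f_i$ with $i < m$, so if it also divided $f_m$ it would contradict $\gcd(f_1,\ldots,f_m) = 1$. Moreover, any fixed polynomial $F$ has only finitely many distinct irreducible factors, and since the $h'_i$ are pairwise coprime each such factor divides at most one $h'_i$; hence for any prescribed $F$, infinitely many indices $j$ satisfy $\gcd(F, h'_j) = 1$. This finiteness argument will be reused repeatedly to keep the inductive construction alive.

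I would then construct the required sequence iteratively. Set $h_1 := f_m$ and $g_1 := g$. Assuming pairwise coprime $h_1,\ldots,h_k \in (f_1,\ldots,f_m)$ together with an auxiliary polynomial $g_k$ satisfying $\gcd(h_i,g_k) = 1$ for $1 \le i \le k$ have been produced, let $P := h_1 \cdots h_k$. By the finiteness remark above applied to $Pg_k$, pick an index $j$ with $\gcd(Pg_k, h'_j) = 1$ and set $h_{k+1} := P + g_k h'_j$ and $g_{k+1} := g_k h'_j$. The elementary identity $\gcd(a,a+b) = \gcd(a,b)$ then yields $\gcd(P, h_{k+1}) = \gcd(P, g_k h'_j) = 1$, so $h_{k+1}$ is coprime to each of $h_1,\ldots,h_k$, and a short verification checks that $\gcd(h_i, g_{k+1}) = 1$ for all $i \le k+1$, preserving both invariants.

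The main obstacle is precisely this bookkeeping: one must maintain pairwise coprimality of the $h_i$ and the auxiliary coprimality with $g_k$ simultaneously, while ensuring that infinitely many admissible values of $j$ remain available at every stage. The crux is the combination of two facts: every polynomial has only finitely many irreducible factors, and the $h'_j$ were built to be pairwise coprime, so the ``forbidden'' indices $j$ at any given step form a finite set. Together these guarantee that the process never terminates and produces the desired infinite pairwise coprime sequence inside $(f_1,\ldots,f_m)$.
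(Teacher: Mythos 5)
Your proof is correct and follows essentially the same route as the paper: induction on $m$, dividing out $g=\gcd(f_1,\ldots,f_{m-1})$ to obtain a pairwise coprime auxiliary sequence, then the iterative construction $h_{k+1}=h_1\cdots h_k+g_k h'_j$, $g_{k+1}=g_k h'_j$ with the same invariants and the same finiteness-of-irreducible-factors argument to keep choosing admissible $j$. The only cosmetic difference is that you select $j$ so that $\gcd(Pg_k,h'_j)=1$ rather than $\gcd(P,g_k h'_j)=1$, a marginally stronger but equivalent condition given the invariant $\gcd(P,g_k)=1$.
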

	
		\begin{corollary}
		\label{the_depth}
		Let $m\ge 2$ and $J=(f_1,f_2,\ldots,f_m) \subset T$ be a homogeneous ideal, where $\gcd(f_1,\ldots,f_m)=1$.
		Then $\grade(J) \ge 2 $.
		\begin{proof}
			We choose two relatively prime elements $p$ and $q$ from the previous \autoref{lemma_gcd}. Then $p$ is regular on $T$, and $q$ is regular on $T/p$ because $\gcd(p,q)=1$.
			Therefore $\{p,q\}$ is a regular sequence and $\grade(J) \ge 2$.
		\end{proof}
	\end{corollary}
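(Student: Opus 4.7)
The plan is to exhibit explicitly a regular sequence of length two inside $J$, which by definition yields $\grade(J) \ge 2$. The key ingredient is that $T = \kk[s,t,u]$ is a unique factorization domain, so coprimality in $T$ translates directly into the non-zerodivisor statements I need.

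First, I would apply the preceding \autoref{lemma_gcd} (with $n=3$) to the generators of $J$: since $\gcd(f_1,\ldots,f_m) = 1$, the lemma produces an infinite sequence of pairwise coprime elements of $J$, from which I simply select two elements $p, q \in J$ with $\gcd(p,q)=1$. Note that any such $p,q$ are automatically nonzero, since $\gcd(0, f) = f$ for any $f$.

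Next, I would check that $\{p, q\}$ is a regular sequence on $T$. Since $T$ is a domain and $p \ne 0$, multiplication by $p$ is injective on $T$, so the first condition holds. For the second condition I must show that $q$ is a non-zerodivisor on $T/(p)$: if $qf \in (p)$ for some $f \in T$, then $p \mid qf$, and because $T$ is a UFD with $\gcd(p,q) = 1$, this forces $p \mid f$, i.e., $f \in (p)$. Hence $\{p,q\} \subset J$ is a regular sequence of length two, and $\grade(J) \ge 2$ follows directly from the definition of grade as the maximal length of a regular sequence contained in $J$.

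All the substantive combinatorial work has been packaged into \autoref{lemma_gcd}; given that lemma together with the UFD property of $T$, there is no real obstacle in this corollary, and the argument is essentially a one-line consequence.
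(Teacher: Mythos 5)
Your proof is correct and follows the same route as the paper's: select two coprime elements $p,q\in J$ from \autoref{lemma_gcd} and observe that they form a regular sequence of length two, using that $T$ is a unique factorization domain (hence a domain). You merely spell out the UFD argument for the second regularity condition a bit more explicitly than the paper does, but the underlying approach is identical.
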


	As consequence of our translation of the condition $\gcd(f_1,\ldots,f_m)=1$ in terms of $\grade(J)\ge 2$, we obtain the following upper bound for the regularity of $J$.
	
	\begin{proposition}
		\label{bound_regularity}
		Let $m\ge2$ and $J=(f_1,f_2,\ldots,f_m) \subset T$ be a homogeneous ideal, where $\gcd(f_1,\ldots,f_m)=1$ and $\deg(f_1),\ldots,\deg(f_m) \le d$.
		Then the regularity is bounded by $\reg(J) \le 3d-2.$
		\begin{proof}
			If we prove $\dim(T/J) \le 1$, then from \cite[Theorem 1.9.4]{REGULARITY} we get  $\reg(J)=\reg(T/J)+1\le 3d - 2$.		
			Since $T$ is a Cohen-Macaulay ring we obtain  $\HT(J)=\grade(J) \ge 2$.		
			Finally, 
			$
			\text{ht}(J) + \dim(T/J) = \dim(T) = 3
			$
			implies that $\dim(T/J) \le 1$.
		\end{proof}
	\end{proposition}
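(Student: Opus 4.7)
The plan is to reduce the problem to applying a known regularity bound for ideals whose quotient has small Krull dimension. I would proceed in three steps, each short.

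First, I would establish that $\grade(J) \ge 2$, which is already isolated as \autoref{the_depth}: the hypothesis $\gcd(f_1,\ldots,f_m)=1$ together with \autoref{lemma_gcd} produces two coprime elements $p,q \in J$, and since $T$ is a UFD the sequence $p, q$ is $T$-regular. Second, I would use that $T = \kk[s,t,u]$ is Cohen--Macaulay to upgrade grade to height, namely $\HT(J) = \grade(J) \ge 2$, and then the Krull dimension identity $\HT(J) + \dim(T/J) = \dim(T) = 3$ forces $\dim(T/J) \le 1$.

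Third, with $\dim(T/J) \le 1$ and generator degrees bounded by $d$, I would invoke the regularity bound for low-dimensional quotients of a polynomial ring (this is the role of \cite[Theorem~1.9.4]{REGULARITY} in the paper), yielding $\reg(T/J) \le 3d - 3$. Combining with the standard shift $\reg(J) = \reg(T/J) + 1$ coming from the short exact sequence $0 \to J \to T \to T/J \to 0$, I get $\reg(J) \le 3d - 2$, as claimed.

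The main obstacle is really only the third step, since parts one and two are quick consequences of the setup already developed in the paper. The work there is hidden in the citation: one must appeal to a regularity estimate that is \emph{not} the generic one (which would grow doubly exponentially in the number of variables) but rather a sharp bound that uses the hypothesis $\dim(T/J) \le 1$ to linearize the dependence on $d$. Once that bound is granted, the proof reduces to the bookkeeping above, and the regularity estimate $3d-2$ follows immediately.
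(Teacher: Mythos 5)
Your proposal follows the paper's proof essentially verbatim: establish $\grade(J)\ge 2$ via \autoref{the_depth}, use the Cohen--Macaulayness of $T$ and the dimension formula to deduce $\dim(T/J)\le 1$, and then invoke the regularity bound for low-dimensional quotients (Theorem 1.9.4 of the cited reference) together with $\reg(J)=\reg(T/J)+1$. The steps, citations, and structure all match; no discrepancies to report.
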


	The following proposition uses the Koszul complex in order to relate the Betti numbers of $J$ with the Hilbert function of $J$. 
	
	\begin{proposition}
		\label{bound_graded_betti}
		Let $J=(f_1,f_2,\ldots,f_m) \subset T$ be a homogeneous ideal. Then
		$$\beta_{2,p}(J)=\dim_{\kk}(\Tor_2^T(J,\kk)_p) \le H_{J}(p-2)-H_{J}(p-3).$$
		\begin{proof}
			Let $\xx=\{s,t,u\}$, we consider the Koszul complex $K(\xx; J) = K(\xx) \otimes_T J$:
			\begin{equation*}
				0 \rightarrow
				J \otimes_T\bigwedge\nolimits^{\!3}T(-3)^3 \xrightarrow{id \otimes_T d_3}
				J \otimes_T\bigwedge\nolimits^{\!2}T(-2)^3 \xrightarrow{id \otimes_T d_2} J \otimes_T\bigwedge\nolimits^{\!1}T(-1)^3 \xrightarrow{id \otimes_T d_1} 
				J \otimes_T\bigwedge\nolimits^{\!0}T^3  \rightarrow 
				0.
			\end{equation*}
			We need to compute in the graded part $(J \otimes_T\bigwedge\nolimits^{\!2}T(-2)^3)_p=J_{p-2} \otimes_{\kk}\bigwedge\nolimits^{\!3}\kk^3$,
			so we only take the complex  
			\begin{align*}
				0 \rightarrow
				J_{p-3} \otimes_{\kk}\bigwedge\nolimits^{\!3}\kk^3 \xrightarrow{(id\otimes_T d_3)_{p-3}}
				J_{p-2} \otimes_{\kk}&\bigwedge\nolimits^{\!2}\kk^3 \xrightarrow{(id\otimes_T d_2)_{p-2}}\\
				&J_{p-1} \otimes_{\kk}\bigwedge\nolimits^{\!1}\kk^3 \xrightarrow{(id\otimes_T d_1)_{p-1}} 
				J_{p}\otimes_{\kk}\bigwedge\nolimits^{\!0}\kk^3  \rightarrow 
				0,
			\end{align*}		
			and we get the formula 
			$$
			\Tor_2^T(J,\kk)_p\cong H_2K(\xx;J)_p = \frac{\Ker(id\otimes_Td_2)_p}{\im(id\otimes_Td_3)_p}
			=\frac{\Ker((id\otimes_Td_2)_{p-2})}{\im((id\otimes_Td_3)_{p-3})}.
			$$
			Then using the fact that $\Ker((id\otimes_Td_2)_{p-2})$ and $\im((id\otimes_Td_3)_{p-3})$ are $\kk$-vector spaces, we can compute  
			$
			\beta_{2,p}(J)=\dim_{\kk}(\Ker((id\otimes_Td_2)_{p-2}))-\dim_{\kk}(\im((id\otimes_Td_3)_{p-3})).
			$ 
			
			From \autoref{pd_ineq_ideal} we know that $\pd_T(J)\le 2$, then we have that $H_3K(\xx;J) \cong \Tor_3^T(J,\kk)=0$ and so $\Ker(id\otimes_Td_3)=0$. 
			From this we conclude that $(id\otimes_Td_3)_{p-3}$ is an injective map and $\dim_{\kk}(\im((id\otimes_Td_3)_{p-3}))=\dim_{\kk}(J_{p-3} \otimes_{\kk}\bigwedge\nolimits^{\!3}\kk^3)=H_{J}(p-3)$.
			
			Let $h_{12} \otimes_{\kk} e_1\wedge e_2+h_{13} \otimes_{\kk} e_1\wedge e_3+h_{23} \otimes_{\kk} e_2\wedge e_3 \in \Ker((id\otimes_T d_2)_{p-2})$. By applying the differential map of the Koszul complex we have 
			$$
			sh_{12}\otimes_{\kk}e_2-th_{12}\otimes_{\kk}e_1+
			sh_{13}\otimes_{\kk}e_3-uh_{13}\otimes_{\kk}e_1+
			th_{23}\otimes_{\kk}e_3-uh_{23}\otimes_{\kk}e_2=0.
			$$ 
			From here we deduce the equations
			$
			th_{12}=-uh_{13}, 
			sh_{12}=uh_{23},
			sh_{13}=-th_{23}.
			$
			Therefore one of the terms can completely determine the other two. This simple fact implies the inequality $\dim_{\kk}(\Ker((id\otimes_Td_2)_{p-2})) \le H_{J}(p-2),$
			and concludes the proof of the proposition.
		\end{proof}
	\end{proposition}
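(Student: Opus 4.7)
The plan is to compute $\Tor_2^T(J,\kk)$ via the Koszul resolution of $\kk$ by the regular sequence $\xx=\{s,t,u\}$ on $T$, tensored with the ideal $J$. Concretely, $\Tor_i^T(J,\kk)\cong H_i(K(\xx)\otimes_T J)$, and restricting to the degree-$p$ strand at homological position $2$ gives a finite-dimensional complex of $\kk$-vector spaces whose three relevant terms involve $J_{p-3}\otimes_\kk\bigwedge^{3}\kk^3$, $J_{p-2}\otimes_\kk\bigwedge^{2}\kk^3$, and $J_{p-1}\otimes_\kk\bigwedge^{1}\kk^3$. Reading off homology one gets
\[
\beta_{2,p}(J)=\dim_\kk\Ker(id\otimes d_2)_{p-2}-\dim_\kk\im(id\otimes d_3)_{p-3},
\]
so the strategy reduces to computing the subtrahend exactly and bounding the minuend by $H_J(p-2)$.

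For the image dimension I would invoke \autoref{pd_ineq_ideal} with $n=3$, which gives $\pd_T(J)\le 2$ and hence the vanishing $\Tor_3^T(J,\kk)=H_3(K(\xx)\otimes_T J)=0$. This forces $id\otimes d_3$ to be injective in every degree, so $\dim_\kk\im(id\otimes d_3)_{p-3}=\dim_\kk(J_{p-3}\otimes\bigwedge^{3}\kk^3)=H_J(p-3)$, pinning down the second term exactly.

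For the kernel, write a generic element in degree $p-2$ as $h_{12}\otimes e_1\wedge e_2+h_{13}\otimes e_1\wedge e_3+h_{23}\otimes e_2\wedge e_3$ with $h_{ij}\in J_{p-2}$, and impose $(id\otimes d_2)=0$. Expanding the Koszul differential and collecting coefficients of $e_1,e_2,e_3$ produces three relations of the form $th_{12}=-uh_{13}$, $sh_{12}=uh_{23}$, and $sh_{13}=-th_{23}$. Since $T$ is a UFD and $s,t,u$ are pairwise coprime, these relations force $u\mid h_{12}$, and then $h_{13}$ and $h_{23}$ are uniquely recovered from $h_{12}$ by division inside $T$. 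Hence projection onto the first coordinate embeds the kernel into $J_{p-2}$, giving $\dim_\kk\Ker(id\otimes d_2)_{p-2}\le H_J(p-2)$, and combining with the image computation yields the stated bound. The main subtlety is the bookkeeping that the projection $(h_{12},h_{13},h_{23})\mapsto h_{12}$ is a genuine injection — i.e.\ that each required division by a variable actually takes place in $T$ rather than merely in its field of fractions — which is ultimately a unique factorization argument but needs to be written carefully.
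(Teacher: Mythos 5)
Your proposal reproduces the paper's proof step for step: the degree-$p$ strand of $K(\xx)\otimes_T J$, the formula $\beta_{2,p}=\dim\Ker((id\otimes d_2)_{p-2})-\dim\im((id\otimes d_3)_{p-3})$, injectivity of $d_3$ from $\pd_T(J)\le 2$ via \autoref{pd_ineq_ideal}, and the same three relations $th_{12}=-uh_{13}$, $sh_{12}=uh_{23}$, $sh_{13}=-th_{23}$ on a kernel element. The only difference is cosmetic: you flag the injectivity of $(h_{12},h_{13},h_{23})\mapsto h_{12}$ as a unique-factorization subtlety, but it is actually immediate from $T$ being a domain (if $h_{12}=0$ then $uh_{13}=uh_{23}=0$, forcing $h_{13}=h_{23}=0$), which is exactly what the paper's phrase ``one of the terms can completely determine the other two'' expresses.
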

	
	\begin{corollary}
		\label{second_betti}
		Let $m\ge2$ and $J=(f_1,f_2,\ldots,f_m) \subset T$ be a homogeneous ideal, with $\gcd(f_1,\ldots,f_m)=1$ and $\deg(f_1),\ldots,\deg(f_m) \le d$. Then
		$\beta_2(J) \le H_J(\reg(J)) \le H_{J}(3d-2).$
		\begin{proof}
			We have that $\beta_{2,p}=0$ for $p >  \reg(J)+2$. Then we compute 
			$$
			\beta_2(J) = \sum_{p=1}^{\reg(J)+2} \beta_{2,p}(J) \le \sum_{p=1}^{\reg(J)+2} (H_{J}(p-2)-H_{J}(p-3))= H_J(\reg(J))\le H_{J}(3d-2).
			$$ 
			The last inequality is obtained from \autoref{bound_regularity}.
		\end{proof}
	\end{corollary}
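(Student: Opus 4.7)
My plan is to combine the pointwise Betti bound of \autoref{bound_graded_betti} with the regularity bound of \autoref{bound_regularity} through a telescoping summation. The starting point is the standard characterization of Castelnuovo--Mumford regularity: $\beta_{i,j}(M)=0$ whenever $j>\reg(M)+i$. Specialised to $i=2$ and $M=J$, this yields $\beta_{2,p}(J)=0$ for every $p>\reg(J)+2$, and therefore the sum defining the total Betti number reduces to the finite one $\beta_2(J)=\sum_{p=1}^{\reg(J)+2}\beta_{2,p}(J)$.

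Next, I would substitute the per-degree bound $\beta_{2,p}(J)\le H_J(p-2)-H_J(p-3)$ from \autoref{bound_graded_betti} into each summand. Since $H_J(k)=0$ for all $k<0$, the sum $\sum_{p=1}^{\reg(J)+2}\bigl(H_J(p-2)-H_J(p-3)\bigr)$ collapses telescopically to $H_J(\reg(J))$, giving the first inequality $\beta_2(J)\le H_J(\reg(J))$.

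For the second inequality, I would invoke \autoref{bound_regularity} to bound $\reg(J)\le 3d-2$, and then appeal to the monotonicity of $H_J$ in non-negative degrees. This monotonicity is immediate from the fact that $J$ is torsion-free as a submodule of the domain $T$: multiplication by any nonzero linear form of $T$ embeds $J_p$ into $J_{p+1}$, so $H_J(p)\le H_J(p+1)$ for all $p$. Combining this with $\reg(J)\le 3d-2$ delivers $H_J(\reg(J))\le H_J(3d-2)$, closing the chain. The argument is largely mechanical once these two ingredients are assembled, so no substantial obstacle appears; the only place needing a brief remark is the monotonicity of $H_J$, which is a routine consequence of torsion-freeness over a polynomial ring.
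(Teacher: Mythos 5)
Your argument coincides with the paper's: both truncate the sum using $\beta_{2,p}(J)=0$ for $p>\reg(J)+2$, substitute the bound $\beta_{2,p}(J)\le H_J(p-2)-H_J(p-3)$ from \autoref{bound_graded_betti}, telescope, and conclude via $\reg(J)\le 3d-2$. The only addition in your write-up is the explicit justification that $H_J$ is non-decreasing (via torsion-freeness over the domain $T$), a point the paper uses implicitly in the step $H_J(\reg(J))\le H_J(3d-2)$; this is a harmless and in fact welcome clarification.
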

	
	\begin{proof}[Proof of \autoref{bounds}.]
		\textit{(i)} The upper bound for the regularity has already been proved in \autoref{bound_regularity}.
		
		\textit{(ii)} Follows from the additivity of the rank function.
		
		\textit{(iii)}
		We know that the number of monomials of degree $d$ in $\kk[s,t,u]$ is $\binom{d+2}{2}$, hence from \autoref{second_betti} we get the upper bound $\beta_2(J)  \le H_J(\reg(J)) \le H_J(3d-2) \le \binom{3d}{2}.$ 
		
		Now we add the extra condition that $J=(f_1,f_2,\ldots,f_m)$ is generated by $m$ polynomials of the same degree $d$.
		Hence for any $p \ge d$ we have that the $\kk$-vector space $J_p$ is generated by elements of the form $gf_i\; (1\le i\le m)$ where $g$ is a monomial of degree $p-d$. 
		So we have that the Hilbert function of $J$ is bounded by 
		$
		H_{J}(p) \le m\binom{p-d+2}{2}.
		$
	\end{proof}
	
	Now for the second part of this section we work with an ideal $J = (g_1, g_2, g_3, g_4) \in S$, such that $d=\deg(g_1)=\cdots=\deg(g_4)$ and $\HT(J)=3$.
	
	\begin{theorem}
		\label{HH3}
		Let $J=(g_1,g_2,g_3,g_4) \subset S$ be a homogeneous ideal with $d=\deg(g_1)=\cdots=\deg(g_4)$ and $\HT(J)=3$. Then $
		\beta_1(J) \le 2d+2$ and $
		\beta_2(J) \le 2d-1$.
	\end{theorem}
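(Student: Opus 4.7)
The plan is to use linkage to reduce to a Gorenstein ideal. First, since $\RR$ is infinite and $\HT(J)=3$, a generic $\RR$-linear change of basis on $\{g_1,g_2,g_3,g_4\}$ lets me assume $g_1,g_2,g_3$ form a regular sequence. I set $K:=(g_1,g_2,g_3)$ and $L:=K:g_4=K:J$. By Peskine-Szpiro linkage, $L$ will be a perfect height-$3$ ideal and multiplication by $g_4$ gives the exact sequence
\[
0\to S/L(-d)\xrightarrow{\cdot g_4}S/K\to S/J\to 0,
\]
in which $S/L(-d)\cong (g_4)\subset S/K$ as graded $S$-modules. Forming the mapping cone of the Koszul resolution of $S/K$ with a minimal resolution $0\to H_3\to H_2\to H_1\to S\to S/L\to 0$ will yield a (non-minimal) free resolution of $S/J$:
\[
0\to H_3(-d)\to S(-3d)\oplus H_2(-d)\to S(-2d)^3\oplus H_1(-d)\to S(-d)^4\to S\to S/J\to 0,
\]
giving $\beta_1(J)\le 3+\beta_0(L)$ and $\beta_2(J)\le 1+\beta_1(L)$.

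Next I would exploit Gorensteinness. Since $S/K$ is Gorenstein Artinian with $1$-dimensional socle in degree $3d-3$, the perfect Gorenstein pairing together with $g_4\not\in K$ forces the map $g_4\cdot\colon(S/K)_{2d-3}\to(S/K)_{3d-3}$ to be surjective. This makes the socle of $S/L$ exactly $1$-dimensional, in degree $2d-3$, so $S/L$ is Artinian Gorenstein of codimension $3$. By the Buchsbaum-Eisenbud structure theorem its minimal resolution is then symmetric with $H_3\cong S(-2d)$, and $\rank(H_1)=\rank(H_2)=\beta_0(L)=\beta_1(L)$ is forced to be odd. Because $\pd_S(S/J)=3$ (Auslander-Buchsbaum, as $S/J$ is Artinian), the summand $H_3(-d)\cong S(-3d)$ of the mapping cone at position $4$ must cancel against the matching Koszul summand $S(-3d)$ at position $3$, upgrading my estimate to $\beta_2(J)\le\beta_1(L)$.

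The remaining step is to prove $\beta_1(L)\le 2d-1$. I would use $H_{S/L}(i)=H_{S/K}(i+d)-H_{S/J}(i+d)\le H_{S/K}(i+d)$ together with the Gorenstein symmetry $H_{S/L}(i)=H_{S/L}(2d-3-i)$ and the Macaulay growth restrictions on Artinian Hilbert functions. Computing the Hilbert-series numerator $(1-z)^3H_{S/L}(z)=1-\sum_a\beta_{1,a}(L)z^a+\sum_b\beta_{2,b}(L)z^b-z^{2d}$ and invoking the Buchsbaum-Eisenbud parity constraint that $\beta_1(L)$ is odd, the minimal decomposition should force $\beta_1(L)=\sum_a\beta_{1,a}(L)\le 2d-1$. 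This gives $\beta_2(J)\le 2d-1$, and then the Euler relation $\beta_0(J)-\beta_1(J)+\beta_2(J)=1$ combined with $\beta_0(J)\le 4$ yields $\beta_1(J)\le 2d+2$.

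The hard part will be the linear-in-$d$ bound $\beta_1(L)\le 2d-1$; everything else is essentially bookkeeping around the mapping cone. Carefully extracting this tight estimate from the combined Hilbert-function and Buchsbaum-Eisenbud parity constraints is the main technical obstacle.
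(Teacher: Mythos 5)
Your overall strategy is the same as the paper's: link $J$ to a Gorenstein ideal $L=K:J$ via the complete intersection $K=(g_1,g_2,g_3)$ obtained from a generic linear change of generators, invoke the Buchsbaum--Eisenbud structure theorem, and recover a (non-minimal) free resolution of $S/J$ by a mapping cone, arriving at $\beta_1(J)\le m+3$ and $\beta_2(J)\le m$, where $m=\beta_0(L)=\beta_1(L)$ is the odd number of Pfaffian generators of $L$. (You take the cone over $0\to (S/L)(-d)\to S/K\to S/J\to 0$ and then cancel the top $S(-3d)$ using $\pd(S/J)=3$; the paper instead uses the dual mapping cone and lands directly on a length-three resolution. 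Both give the same bounds, and your socle-degree computation $s=2d-3$ matches the paper's.)

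The gap is precisely the step you flag as the ``main technical obstacle'': the bound $m\le 2d-1$. The route you sketch --- reading the numerator $(1-z)^3H_{S/L}(z)$ together with Macaulay growth and the parity of $m$ --- cannot work as stated, because the Hilbert series only records the differences $\beta_{1,j}(L)-\beta_{2,j}(L)$ in each degree $j$; a generator and a first syzygy of $L$ occurring in the same degree cancel in the numerator, so no amount of Hilbert-function bookkeeping plus the parity constraint can bound $\sum_j\beta_{1,j}(L)$. The paper closes this with a genuine structural input, namely Diesel's theorem (\cite[Theorem 3.3]{DIESEL}), which bounds the number of Pfaffian generators of a codimension-three Artinian Gorenstein ideal by $2k+1$, where $k$ is its initial degree. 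The remaining step is then elementary: the symmetry of $H_{S/L}$ about $(2d-3)/2$ gives $H_{S/L}(d-2)=H_{S/L}(d-1)$, and if $k\ge d$ these values would be $\binom{d}{2}$ and $\binom{d+1}{2}$, a contradiction; hence $k\le d-1$ and $m\le 2d-1$. To complete your argument you must import Diesel's theorem (or reprove its degree bookkeeping for the alternating presentation matrix); the Hilbert-function considerations alone do not suffice.
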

	
	The proof of \autoref{HH3} is divided in some steps that are given below.
	
	\begin{remark}
		From the Unmixedness Theorem and the fact that $\RR$ is an infinite field, we can find a complete intersection inside $J$ (see \cite[Lemma A.10]{SZANTO}, \cite[Theorem 125]{KAP}).
		Explicitly, there exist scalars $\alpha_{ij} \in \RR$ that give us the following sort of triangular transformation 
		\begin{align*}
			h_1&=   g_1 + \alpha_{12}g_2 + \alpha_{13}g_3 + \alpha_{14}g_4, \\
			h_2&=   g_2  + \alpha_{23}g_3 +\alpha_{24}g_4,\\
			h_3&=  g_3 + \alpha_{34}g_4,\\
			h_4&=  g_4, 
		\end{align*} 
		where $\{h_1,h_2,h_3\}$ is a complete intersection.
		Therefore, we can assume that $J=(h_1,h_2,h_3,h_4)$, where $\{h_1,h_2,h_3\}$ is a complete intersection and $d=\deg(h_1)=\cdots=\deg(h_4)$.
		Also, we can suppose that $h_4 \not\in (h_1,h_2,h_3)$, because in case $J=(h_1,h_2,h_3)$ then the minimal free resolution of $S/J$ can be obtained with the Koszul complex, that trivially  satisfies the result of \autoref{HH3}.
	\end{remark}
	
	We shall take a similar approach to \cite{ROIG} using a process of linkage or liaison. We make the observation that $J=(h_1,h_2,h_3,h_4)$ can be linked to a Gorenstein ideal $G$  (see \cite[Corollary 5.19]{LIAISON}, \cite[Proposition 5.2]{GORH3}) via the complete intersection $K=(h_1,h_2,h_3)$, i.e., $G=(K:J)$.
	
	The minimal free resolution of $S/K$ is given by the Koszul complex.
	Using Buchsbaum and Eisenbud's structure theorem for height 3 Gorenstein ideals \cite[Theorem 2.1]{GORH3}, the minimal free resolution of $S/G$ has the form
	$$
	0 \rightarrow S(-s-3) \xrightarrow{g^{*}} \bigoplus_{i=1}^{m}S(-p_i) \xrightarrow{f} \bigoplus_{i=1}^{m}S(-q_i) \xrightarrow{g} S \rightarrow S/G \rightarrow 0,
	$$
	where $s$ is the socle degree of $G$ (the largest $k$ such that ${(S/G)}_k\neq 0$), $m$ is odd, $f$ is alternating, and $G=\text{Pf}_{m-1}(f)$ (the ideal generated by the $(m-1)$-th Pfaffians of $f$).
	
	\begin{lemma}
		\label{Socle_Deg}
		The socle degree of $S/G$  is $s=2d-3$.
		\begin{proof}
			Since $K$ is a complete intersection we know that the socle degree of $S/K$ is $3d-3$. 
			We have that the Hilbert function of an Artinian Gorenstein algebra is symmetric, also we can relate the Hilbert functions of $S/K$, $S/G$ and $S/J$ (see \cite[Theorem 2.10, page 308]{SIXCOMM}, \cite{ROIG}) in the following way
			$$
			H_{S/G}(t) = H_{S/K}(3d-3-t) - H_{S/J}(3d-3-t).
			$$
			Then for any $t > 2d-3$ we have $3d-3-t < d$ and $H_{S/J}(3d-3-t)=H_{S/K}(3d-3-t)=\binom{3d-1-t}{2}$, also we can easily check that $H_{S/G}(2d-3)=1$. 
		\end{proof}
	\end{lemma}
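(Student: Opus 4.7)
My plan is to combine the classical Hilbert-function formula from linkage theory with a direct computation in a single critical degree, using the fact that an Artinian Gorenstein algebra has socle degree equal to the largest $t$ with nonzero Hilbert function.

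First, I would record that $S/K$ is Artinian Gorenstein with socle degree $s_K = 3d-3$: this is the standard socle degree of a complete intersection of three forms of degree $d$ in a three-variable polynomial ring, which one reads off from the Koszul complex on $h_1,h_2,h_3$.

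Next, I would invoke the Davis--Geramita--Orecchia Hilbert-function formula under linkage, which is exactly the reference cited in the statement (\cite[Theorem 2.10, page 308]{SIXCOMM}, compare \cite{ROIG}). Since $G = K:J$ with $K \subseteq J$ and $K$ an Artinian Gorenstein complete intersection of socle degree $s_K$, it states
$$H_{S/G}(t) \;=\; H_{S/K}(s_K - t) - H_{S/J}(s_K - t) \qquad \text{for all } t \in \ZZ.$$

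The argument then splits into two small checks. To see that $H_{S/G}(t) = 0$ for every $t > 2d-3$, note that in that range $s_K - t < d$; since both $J$ and $K$ are generated in degree $d$, the graded pieces $K_{s_K-t}$ and $J_{s_K-t}$ both vanish, so $H_{S/K}(s_K-t)$ and $H_{S/J}(s_K-t)$ both equal the full Hilbert function $\binom{3d-1-t}{2}$, and the right-hand side collapses to $0$. For nonvanishing at $t = 2d-3$, substituting gives $s_K - t = d$, and the formula rewrites as $H_{S/G}(2d-3) = \dim_{\RR} J_d - \dim_{\RR} K_d$; since $J = K + \RR\cdot h_4$ with $h_4 \notin K$ a form of degree $d$, this difference is exactly $1$. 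As $S/G$ is a quotient of the Artinian ring $S/K$, it is itself Artinian, and its socle degree is the largest $t$ with $H_{S/G}(t) \ne 0$; the two checks pin this down to $s = 2d-3$.

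I do not foresee a real obstacle: the only nontrivial input is the linkage Hilbert-function formula, which is quoted directly from the literature, and the remainder is routine bookkeeping on the graded pieces of $J$ and $K$ in degrees $d$ and $3d-3$.
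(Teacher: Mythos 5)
Your proof is correct and follows essentially the same route as the paper: record the socle degree $3d-3$ of the complete intersection $S/K$, apply the Davis--Geramita--Orecchia linkage formula for the Hilbert function of $S/G$, and then read off $2d-3$ as the top nonzero degree by the two degree checks. The only difference is that you spell out the computation $H_{S/G}(2d-3) = \dim_\RR J_d - \dim_\RR K_d = 1$ explicitly, where the paper just says it ``can easily check''; your version is a welcome bit of extra detail but does not change the argument.
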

	
	\begin{proof}[Proof of \autoref{HH3}.]
		By using \autoref{Socle_Deg} we substitute the socle degree of $S/G$  in its minimal free resolution, and from the canonical map $S/K \rightarrow S/G$ we can lift a comparison map
		\begin{center}					
			\begin{tikzpicture}		
			\matrix (m) [matrix of math nodes,row sep=3em,column sep=2.5em,minimum width=2em,text height=1.5ex, text depth=0.25ex]
			{
				0&S(-3d) & S(-2d)^3 & S(-d)^3 & S & S/K & 0 \\
				0 & S(-2d)  &   \bigoplus_{i=1}^{m}S(-p_i)    & \bigoplus_{i=1}^{m}S(-q_i) & S & S/G & 0. \\
			};
			\path[-stealth]	
			(m-1-6) edge node [right] {$can$} (m-2-6)
			(m-1-5) edge node [right] {$\phi_0=id$} (m-2-5)
			(m-1-4) edge node [right] {$\phi_1$} (m-2-4)
			(m-1-3) edge node [right] {$\phi_2$} (m-2-3) 
			(m-1-2) edge node [right] {$\phi_3$} (m-2-2)	
			(m-1-1) edge (m-1-2)	
			(m-2-1) edge (m-2-2)	
			(m-1-6) edge (m-1-7)	
			(m-2-6) edge (m-2-7)
			(m-1-2) edge node [above] {$d_3$} (m-1-3)
			(m-1-3) edge node [above] {$d_2$} (m-1-4)
			(m-1-4) edge node [above] {$d_1$} (m-1-5)	
			(m-1-5) edge  (m-1-6)	
			(m-2-2) edge node [above] {$g^{*}$} (m-2-3)
			(m-2-3) edge node [above] {$f$} (m-2-4)
			(m-2-4) edge node [above] {$g$} (m-2-5)	
			(m-2-5) edge  (m-2-6)		
			;			
			\end{tikzpicture}
		\end{center}
		
		With a dual mapping cone construction (\cite{GORH3}, \cite{ROIG}) we can obtain the following free resolution for $S/J$ (not necessarily minimal)
		$$
		0 \rightarrow \bigoplus_{i=1}^mS(-3d+q_i) \rightarrow 
		\begin{array}{c}
		S(-2d)^3\\
		\bigoplus \\
		\bigoplus_{i=1}^mS(-3d+p_i)
		\end{array} \rightarrow S(-d)^4 \rightarrow S \rightarrow
		S/J \rightarrow 0.
		$$
		Thus we have $\beta_2(J) \le m$ and $\beta_1(J) \le m+3$.
		In \cite[Theorem 3.3]{DIESEL} it is proved that given the smallest degree $k$ of the generators of $G$ (i.e., $k$ is the first position in which $H_{S/G}(k) < \binom{k+2}{2}$) then $m\le 2k+1$.
		
		Since $S/G$ has socle degree $2d-3$ and  its Hilbert function is symmetric, we have $H_{S/G}(d-2)=H_{S/G}(d-1)$. Hence $k \le d-1$ because otherwise we get the contradiction $\binom{d}{2} = \binom{d+1}{2}$. Therefore, we have obtained $\beta_2(J) \le 2d-1$ and $\beta_1(J) \le 2d+2$.
	\end{proof}
	
	\begin{remark}
		\label{general_form}
		An interesting fact proved in \cite[Corollary 4.4]{ROIG}, is that when the ideal $J=(g_1,g_2,g_3,g_4)$ is a general Artinian almost complete intersection of type $(d,d,d,d)$, then the minimal free resolution can be given explicitly. This means that $J$ is generated by ``generically chosen'' polynomials $g_1,g_2,g_3,g_4$, where $(g_1,g_2,g_3)$ are a complete intersection, $g_4 \not\in (g_1,g_2,g_3)$, and $d=\deg(g_1)=\cdots=\deg(g_4)$. The  minimal free resolution of $S/J$ in this case is
		$$
		0 \rightarrow S(-2d-1)^d \rightarrow 
		\begin{array}{c}
		S(-2d)^3\\
		\bigoplus \\
		S(-2d+1)^d
		\end{array} \rightarrow S(-d)^4 \rightarrow S \rightarrow
		S/J \rightarrow 0.
		$$
		The term ``generically chosen'' means that $g_1,g_2,g_3,g_4$ belong to a suitable dense open subset of $S_d \times S_d \times S_d \times S_d$ in the Zariski topology.
	\end{remark}
	
	\section{Projective dimension two}
	\label{section4}
	
	From \autoref{pd_ineq_ideal} we know that $\pd(\II) \le 2$.
	Here we deal with the remaining case $\pd(\II)=2$, because $\pd(\II)=1$ was studied in \cite{STRONG_MU_BASIS}.
	In the rest of this paper we use the following notation.
	\begin{notation}
		Let $\RR$ be an infinite field, $R$ be the polynomial ring $R=\RR[s,t]$ and $S$ be the polynomial ring $S=\RR[s,t,u]$.
	\end{notation}

	From \autoref{Euler_char} we get a free resolution 
	\begin{equation}
		\label{resolut_pd_2}
		0 \rightarrow S^{a} \xrightarrow{\hat{d_2}} S^{a+3} \xrightarrow{\hat{d_1}}  S^4 \xrightarrow{[b_1,b_2,b_3,b_4]} \II \rightarrow 0,  
	\end{equation}
	and now we want to compute the value of $a$. Here we are using an abuse of notation, because  we should write
	\begin{equation}
		\label{resolut_pd_2_}
		0 \rightarrow \bigoplus_{i=1}^{a} S(-p_i) \xrightarrow{\hat{d_2}} \bigoplus_{i=1}^{a+3} S(-q_i) \xrightarrow{\hat{d_1}}  S(-d)^4 \xrightarrow{[b_1,b_2,b_3,b_4]} \II \rightarrow 0,
	\end{equation}
	if we want to take care of the grading.
	
	In \autoref{resolut_pd_2} we do not know if ${b_1,b_2,b_3,b_4}$ is a minimal system of generators. 
	But in the next step of finding the resolution \autoref{resolut_pd_2} of $\II$, we can choose a minimal system of generators for $\Syz(b_1,b_2,b_3,b_4)$ because it is a graded module.
	Therefore, from \cite[Theorem 7.3]{GRADED} we can assure that $\im(\hat{d_2}) \subseteq \mm S^{a+3}$, where $\mm=(s,t,u)$ is the irrelevant ideal.
	By exploiting the condition $\im(\hat{d_2}) \subset \mm S^{a+3}$ we will \textit{``adapt''} the upper bounds obtained in the previous section to  \autoref{resolut_pd_2}.
	
	\begin{lemma}
		\label{bound_grading}
		For the resolution \autoref{resolut_pd_2} (more specifically \autoref{resolut_pd_2_}) we have that
		\begin{enumerate}[(i)]
			\item $\max_{1\le i \le a+3}(q_i) \le 3d-1$,
			\item $\max_{1\le i \le a}(p_i) \le 3d$,
			\item $a = \beta_2(\II).$
		\end{enumerate}
		\begin{proof}		
			Here we use the key fact that for a graded free $S$-module $F = \bigoplus_{i = 1}^r S(-\alpha_i)$ we have $(F \otimes_S \RR)_p=0$ if and only if $p \neq \alpha_i$ for all $1 \le i \le r$.
			
			\textit{(i)}
			Let $p> 3d-2+1=3d-1$.  
			The upper bound $\reg(\II) \le 3d-2$ (\autoref{bounds}$(i)$) yields that $\beta_{1,p}=0$, and this implies that 
			$\Tor_1^S(\II,\RR)_p=0$.
			The condition $\im(\hat{d_2}) \subseteq \mm S^{a+3}$ gives us that $\im(\hat{d_2} \otimes_S \RR) =0$ and so we get $\Ker((\hat{d_1} \otimes_S \RR)_p)=\Tor_1^S(\II,\RR)_p=0$.
			Since $(\hat{d_1} \otimes_S \RR)_p$
			is an injective map and $(S(-d)^4 \otimes_S \RR)_q=0$ for $q > d$, then we conclude
			$$
			{\left(\left(\bigoplus_{i=1}^{a+3}S(-q_i)\right)\bigotimes_{S} \RR\right)}_p=0.
			$$ 
			Therefore we have the inequality $\max_{1\le i \le a+3}(q_i) \le 3d-1$.
			
			\textit{(ii), (iii)}	Deleting $\II$ from \autoref{resolut_pd_2} and applying the tensor product $\otimes_S \RR$ we get the complex 
			$$
			0 \rightarrow \RR^{a} \xrightarrow{0} \RR^{a+3} \xrightarrow{\hat{d_1} \otimes_S F}  \RR^4  \rightarrow 0.
			$$	
			So $a=\dim_{\RR}(\Tor_2^S(\II,\RR))=\beta_2(\II)$, and the grading of the module $S^a$ is just like the one for a minimal free resolution, i.e. $\le (3d-2)+2=3d$.
		\end{proof}
	\end{lemma}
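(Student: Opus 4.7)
The plan is to compute $\Tor_1^S(\II, \RR)$ and $\Tor_2^S(\II, \RR)$ directly from the resolution \autoref{resolut_pd_2_}, exploiting the hypothesis $\im(\hat{d_2}) \subseteq \mm S^{a+3}$ that makes $\hat{d_2} \otimes_S \RR$ the zero map, and then invoking the regularity bound $\reg(\II) \le 3d-2$ from \autoref{bounds}(i). The central observation is that, thanks to the $\mm$-minimality of $\hat{d_2}$, even though the generating set $\{b_1,\ldots,b_4\}$ of $\II$ may not be minimal, the left-hand end of \autoref{resolut_pd_2_} already behaves like a minimal resolution, which lets me read off part of the Betti data verbatim.

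For part (iii), I would delete $\II$ and tensor the resolution with $\RR$ over $S$. Because $\hat{d_2} \otimes \RR = 0$, homology at the leftmost spot returns $\RR^a$ unchanged, identifying $\Tor_2^S(\II, \RR) \cong \bigoplus_{i=1}^a \RR(-p_i)$ as graded vector spaces. Comparing dimensions gives $a = \beta_2(\II)$, and comparing internal degrees shows that the multiset of shifts $\{p_i\}$ is exactly the one appearing in the minimal free resolution of $\II$. Part (ii) should then follow at once from the standard estimate $\beta_{2,p}(\II) = 0$ for $p > \reg(\II) + 2$, which caps every $p_i$ at $3d$.

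The most delicate step will be (i), because $\hat{d_1}$ is not assumed to be minimal — the $b_j$'s can fail to be $\RR$-linearly independent modulo $\mm \II$. For any $p > 3d - 1 \ge \reg(\II) + 1$, however, the regularity bound gives $\Tor_1^S(\II, \RR)_p = 0$; computing this Tor from \autoref{resolut_pd_2_}, and using once more that $\hat{d_2} \otimes \RR = 0$, I would deduce that $(\hat{d_1} \otimes \RR)_p$ is injective. Since the target $(S(-d)^4 \otimes \RR)_p$ already vanishes for every $p > d$, the source $(\bigoplus_i S(-q_i) \otimes \RR)_p$ must also vanish, which precludes any $q_i > 3d - 1$. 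Conceptually, the non-minimality of $\hat{d_1}$ is harmless here: any redundant linear dependences among the equidegree $b_j$'s can contribute only summands of the form $S(-d)$ to $\bigoplus_i S(-q_i)$, and these automatically satisfy $d \le 3d - 1$.
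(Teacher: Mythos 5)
Your proposal follows exactly the same route as the paper's proof: tensor the resolution with $\RR$, use that $\im(\hat{d_2}) \subseteq \mm S^{a+3}$ forces $\hat{d_2} \otimes_S \RR = 0$ so that $\Tor_2$ and $\Tor_1$ can be read off at the spots $S^a$ and $\Ker(\hat{d_1}\otimes_S\RR)$, and then invoke $\reg(\II)\le 3d-2$ together with the vanishing of the graded pieces $(S(-d)^4\otimes_S\RR)_p$ for $p>d$. The closing remark about redundant generators contributing only $S(-d)$ summands is correct but not needed, since the injectivity argument already handles the non-minimality of $\hat{d_1}$.
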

	
	So in this case the resolution of $\II$ is of the form 
	$$
	0 \rightarrow S^{\beta_2(\II)} \xrightarrow{\hat{d_2}} S^{\beta_2(\II)+3} \xrightarrow{\hat{d_1}}  S^4 \xrightarrow{[b_1,b_2,b_3,b_4]} \II \rightarrow 0,
	$$
	where the polynomials in the entries of the matrices $\hat{d_1}$ and $\hat{d_2}$ have degree bounded by $3d-1-d=2d-1$, and  $3d-d=2d$ respectively.
	
	\begin{remark}
		\label{Upper_bound_deg_Syz}
		We apply the tensor product with $\otimes_S S/(u-1)$ to obtain the exact sequence (see \cite[Corollary 19.8]{EISENBUD_COMM} or \cite[Proposition 1.1.5]{BRUNS_HERZ})
		$$
		0 \rightarrow R^{\beta_2(\II)} \xrightarrow{d_2} R^{\beta_2(\II)+3} \xrightarrow{d_1}  R^4 \xrightarrow{[a_1,a_2,a_3,a_4]} I \rightarrow 0,
		$$
		where $d_1=\hat{d_1} \otimes_S S/(u-1)$ and $d_2=\hat{d_2} \otimes_S S/(u-1)$ are matrices with entries in $R$ bounded in degree by $2d-1$ and $2d$ respectively.
	
		For the rest of this section we shall work with the exact sequence 
		\begin{equation}
			\label{split_exact_seq}
			0 \rightarrow R^{\beta_2(\II)} \xrightarrow{d_2} R^{\beta_2(\II)+3} \xrightarrow{d_1} \Syz(a_1,a_2,a_3,a_4) \;(\subset R^4) \rightarrow 0,
		\end{equation}
		which is a split exact sequence because we know that $\Syz(a_1,a_2,a_3,a_4)$ is a free module.
	\end{remark}
	
	An $m \times n\;(m>n)$ polynomial matrix $A \in R^{m \times n}$ is said to be \textit{unimodular} if it satisfies one of the following equivalent conditions (see \cite{SERRE_CONJ})
	\begin{enumerate}[(i)]
		\item $A$ can be completed into an invertible $m \times m$ square matrix.
		\item there exists an $n \times m$ polynomial matrix $B \in R^{n \times m}$ such that $AB=I_m$.
		\item there exists an $n \times m$ polynomial matrix $B \in R^{n \times m}$ such that $BA=I_n$.
		\item the ideal generated by the $n \times n$ minors of A is equal to $R$.
	\end{enumerate}
	
	We define the degree of a matrix $M=(a_{ij}) \in R^{m \times n}$ as the maximum degree of the polynomial entries of $M$, i.e., $\deg(M)=\max(\deg(a_{ij}))$. For an \textit{``effective''} solution of completing a unimodular matrix we are going to use the following result from \cite{SERRE_ART}.
	
	\begin{theorem}
		\label{CANIGLIA_RES}
		Let $F \in R^{m \times n} \; (m < n)$ be a unimodular matrix. 
		Then there exists a square matrix $M \in R^{n \times n}$ such that
		\begin{enumerate}[(i)]
			\item $M$ is unimodular,
			\item $FM=[I_m,0] \in R^{m \times n}$,
			\item $\deg(M) \le 2D(1+2D)(1+D^{4})(1+D)^{4}$, where $D = m(1 + \deg(F))$.  
		\end{enumerate}
		\begin{proof}
			See the \hyperref[appendix]{Appendix} for a discussion.
		\end{proof} 
	\end{theorem}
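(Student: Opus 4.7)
The plan is to prove the theorem via an explicit constructive procedure for completing the unimodular matrix, with careful bookkeeping of degrees at every step. The first reduction is from the matrix case to the row case: given a unimodular $F \in R^{m \times n}$, it suffices to know how to transform any unimodular row $\mathbf{v} = (v_1,\ldots,v_n) \in R^n$ (i.e.\ with $(v_1,\ldots,v_n) = R$) into $(1,0,\ldots,0)$ via right-multiplication by an explicit unimodular matrix of controlled degree. Applying this to the first row of $F$, then performing column operations to clear the first entries of the remaining rows, reduces the problem to an $(m-1) \times (n-1)$ unimodular matrix, and the process iterates $m$ times. Each iteration multiplies the degree estimate by a controlled factor depending on $D = m(1+\deg(F))$; stacking $m$ of them contributes the factor $m$ implicit in the definition of $D$.

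The core technical step is therefore the effective unimodular row problem over $R = \RR[s,t]$. Here I would follow the Suslin--Vaserstein strategy in a double induction: an outer induction on the number of variables (using that $\RR[s]$ is a principal ideal domain for the base case, where the extended Euclidean algorithm gives an explicit solution with well-understood degree growth), and an inner induction using Horrocks' theorem. Since $\RR$ is infinite, a generic linear change of variables in $(s,t)$ can be applied to make a suitable Bezout combination of the $v_i$ monic in $t$ over $\RR[s]$; such a change of variables preserves total degree. One then uses an effective version of Suslin's lemma (Noether normalization style) to produce a row with a monic entry, and Horrocks' construction to effectively descend back to $\RR[s]$.

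The degree accounting then proceeds stage by stage. The Horrocks reduction introduces a factor of roughly $1+D$, coming from the degree of the monic polynomial produced. The extended Euclidean step over $\RR[s]$ produces matrix entries bounded by another polynomial in $D$, contributing the nested $1+D^4$ term. Reassembling the output of Horrocks' construction into a matrix over $\RR[s,t]$ multiplies degrees once more, and finally the outer loop that peels off rows one at a time supplies the last $(1+D)^4$ factor. The explicit product $2D(1+2D)(1+D^4)(1+D)^4$ is simply the worst-case bound obtained by composing these nested estimates, as worked out in \cite{SERRE_ART}.

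The main obstacle is not existence---Quillen--Suslin already provides $M$---but rather ensuring that every reduction used is constructive and that no step causes exponential blow-up in degree. Classical proofs (Quillen patching, Suslin's original argument) are insensitive to degree and would yield no bound at all. The delicate work is to select, at each of the several reduction levels, the most efficient variant (generic changes of coordinates with controlled genericity bound, the constructive Horrocks of Logar--Sturmfels type, and the polynomial extended Euclidean algorithm) and to carefully multiply together the resulting degree contributions. That combinatorial-arithmetic bookkeeping is what produces the specific polynomial bound in (iii).
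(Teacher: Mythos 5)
Your high-level plan diverges from the paper's at the very first step, and the divergence matters for the degree bound.  The paper's appendix follows Caniglia--Corti\~nas--Dan\'on--Heintz--Krick--Solern\'o directly and never reduces to the unimodular \emph{row} problem: it keeps the full $m \times n$ matrix $F$ throughout.  Their key step (Proposition~4.1 of \cite{SERRE_ART}, reproduced in the appendix) is an effective local--global argument that produces a single unimodular $M$ with $F(x_1,\dots,x_{n-1},x_n)\,M = F(x_1,\dots,x_{n-1},0)$, i.e.\ it eliminates one variable from $F$ without changing the shape or increasing the degree of $F$ itself; only the transformation matrix has degree controlled by $D = m(1+\deg F)$.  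Iterating over the $n$ variables and finishing with constant elementary operations then gives $\deg(M) \le n\,D(1+2D)(1+D^{2n})(1+D)^{2n}$, which for $n=2$ is exactly the stated bound.  Because the object being specialized never grows, the $n$ elimination steps contribute \emph{additively}.

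Your row-peeling scheme breaks precisely this feature.  After completing the first row by some $M_1$ of degree polynomial in $D$ and then clearing the first column, the residual $(m-1)\times(n-1)$ matrix has degree on the order of $\deg F + \deg M_1$, so the next iteration is applied to a matrix whose degree is already a polynomial in $D$.  Compounding this $m$ times gives a bound that is (at best) exponential in $m$, not the fixed-degree polynomial in $D$ claimed in (iii).  You also misattribute the factor $m$ inside $D = m(1+\deg F)$: it does not come from iterating $m$ times over rows, but from the degrees of the $m\times m$ minors of $F$, which enter the effective Nullstellensatz/partition-of-unity certificate ($1 \in (c_1,\dots,c_N)$ with $N \le (1+md)^{2n}$) in Claim~1 of the appendix.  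So while the ingredients you list (Horrocks, Suslin-style descent, generic change of coordinates over an infinite field, extended Euclidean algorithm in one variable) are all genuinely present in the reference, the decomposition you propose would not reproduce the stated polynomial bound; the effectiveness hinges on doing the variable elimination at the matrix level so that the input degree stays fixed.
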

	
	This previous result is given for completing rows (i.e., $m < n$), but we want to complete columns (i.e., $m > n$). By simply taking transpose in $(ii)$ of the previous theorem we get the following corollary.
	
	\begin{corollary}
		\label{completing_cols}
		Let $F \in R^{m \times n} \; (m > n)$ be a unimodular matrix. 
		Then there exists a square matrix $M \in R^{m \times m}$ such that
		\begin{enumerate}[(i)]
			\item $M$ is unimodular,
			\item $MF=\left[ \begin{array}{c}
			I_n \\
			0
			\end{array}\right] \in R^{m \times n}$,
			\item $\deg(M) \le 2D(1+2D)(1+D^{4})(1+D)^{4}$, where $D = n(1 + \deg(F))$.  
		\end{enumerate}
	\end{corollary}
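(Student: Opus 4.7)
The plan is to reduce the statement directly to \autoref{CANIGLIA_RES} by transposition, exactly as the brief parenthetical remark preceding the statement announces. First I would verify that $F^T \in R^{n \times m}$ satisfies the hypotheses of \autoref{CANIGLIA_RES}: since $m > n$, the transpose has fewer rows than columns, and its $n \times n$ minors coincide up to sign with those of $F$, which generate $R$ by the unimodularity of $F$ (condition (iv) of the definition). Hence $F^T$ is unimodular in the ``wide'' form required by the theorem.

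Next I would apply \autoref{CANIGLIA_RES} to $F^T$, where the role of ``$m$'' in that theorem is played by $n$ and the role of ``$n$'' by $m$. This produces a unimodular square matrix $N \in R^{m \times m}$ satisfying $F^T N = [I_n, 0] \in R^{n \times m}$, together with the degree estimate
$$
\deg(N) \;\le\; 2D(1+2D)(1+D^{4})(1+D)^{4}, \qquad D \;=\; n(1+\deg(F^T)) \;=\; n(1+\deg(F)),
$$
which matches the $D$ appearing in the statement of the corollary, since transposition does not alter the entries.

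Finally I would set $M := N^T \in R^{m \times m}$. Taking the transpose of $F^T N = [I_n, 0]$ yields
$$
MF \;=\; N^T F \;=\; \left[ \begin{array}{c} I_n \\ 0 \end{array}\right] \;\in\; R^{m \times n},
$$
establishing (ii). For (i), the transpose of a unimodular square matrix is again unimodular: from $N N^{-1} = I_m$ one gets $(N^{-1})^T M = I_m$, so $M$ has a polynomial inverse. Finally, $\deg$ is invariant under transposition, so $\deg(M) = \deg(N)$ inherits the bound above, giving (iii).

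There is essentially no obstacle here: the argument is a purely formal transposition that treats \autoref{CANIGLIA_RES} as a black box. The only point to be mindful of is bookkeeping with the dimension parameters when relabeling, so that the ``smaller dimension'' (which is $m$ in the theorem and $n$ in the corollary) consistently appears as the leading factor in the expression for $D$.
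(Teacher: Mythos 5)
Your argument is correct and is precisely the route the paper takes: the corollary is obtained from \autoref{CANIGLIA_RES} by transposing $F$, applying the theorem, and transposing back, with the dimension bookkeeping $D = n(1+\deg(F))$ matching because the smaller dimension plays the role of ``$m$'' in the cited theorem. (Minor quibble: the $n\times n$ minors of $F^T$ agree with those of $F$ exactly, not merely up to sign, since $\det(A^T)=\det(A)$.)
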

	
	\noindent
	For notational purposes we make the following conventions 
	\begin{compactitem}
		\item we use just $\beta_2$ instead of $\beta_2(\II)$,
		\item $m = \beta_2+3$ and $n=\beta_2$,
		\item $F \in R^{m \times n}$ denotes the $m \times n$ matrix corresponding with the map $d_2$,
		\item $\gamma_2 = \deg(F)$,
		\item $G \in R^{4 \times m}$ denotes the $4 \times m$ matrix corresponding with the map $d_1$,
		\item $\gamma_1 =\deg(G)$,
		\item $D = n(1+\deg(F)) = \beta_2(1+\gamma_2)$,
	\end{compactitem}
	thus we end up with the following short exact sequence
	\begin{equation}
		\label{split_seq_final}
		0 \rightarrow R^n \xrightarrow{F} R^m \xrightarrow{G} \Syz(a_1,a_2,a_3,a_4)\;(\subset R^4) \rightarrow 0.
	\end{equation}
	Since this sequence splits, there exists a matrix $H \in R^{n \times m}$ with $HF=I_n$ and so the matrix $F$ is unimodular. 
	
	\begin{proposition}
		\label{find_basis}
		From the exact sequence \autoref{split_seq_final}
		we can get a basis for $\Syz(a_1,a_2,a_3,a_4)$ made of three vectors $\mathbf{p},\mathbf{q},\mathbf{r} \in R^4$, with 
		\begin{align*}
			\max(\deg(\mathbf{p}),\deg(\mathbf{q}),\deg(\mathbf{r})) &\le \gamma_1(\beta_2+2)2D(1+2D)(1+D^{4})(1+D)^{4}\\
			\le 2\gamma_1(\beta_2+2)\beta_2&(1+\gamma_2)(1+2\beta_2(1+\gamma_2))(1+{(\beta_2(1+\gamma_2))}^{4})(1+\beta_2(1+\gamma_2))^{4}.			
		\end{align*}
		\begin{proof}
			We can get a matrix $M \in R^{m \times m}$  that satisfies \textit{(i), (ii), (iii)} from \autoref{completing_cols}.		
			Let $N \in R^{m \times m}$ be the inverse matrix of $M$, then we have that
			$\deg(N) \le (m-1)\deg(M)$,
			because the determinant of every $(m-1)\times(m-1)$-minor is a polynomial of degree at most $m-1$ in terms of the entries of $M$.	
			Also, from the item $(ii)$ of \autoref{completing_cols} we have that
			$$
			F = N \left[ \begin{array}{c}
			I_n \\
			0
			\end{array}\right]. 
			$$
			
			We know that $N$ is an automorphism for $R^m=N(\text{span}(e_1,\ldots,e_n,e_{n+1},e_{n+2},e_{n+3}))$ and that 
			$
			\Ker(G) = \im(F) = N(\text{span}(e_1, \ldots, e_n)), 
			$		
			where $e_i$ is the $i$-th column vector of $R^m$. Hence we have
			$$
			\Syz(a_1,a_2,a_3,a_4)=\im(G)=G(R^m)=GN(\text{span}(e_1,\ldots,e_m))=GN(\text{span}(e_{n+1},e_{n+2},e_{n+3})),
			$$
			and we define the basis for $\Syz(a_1,a_2,a_3,a_4)$ as
			\begin{align*}
				\mathbf{p} &= GNe_{n+1},\\
				\mathbf{q} &= GNe_{n+2},\\
				\mathbf{r} &= GNe_{n+3}.
			\end{align*}
			Finally, we obtain the result
			\begin{align*}
				\max(\deg(\mathbf{p}),\deg(\mathbf{q}),\deg(\mathbf{r})) &\le \deg(G)\deg(N) \le \gamma_1(\beta_2+2)2D(1+2D)(1+D^{4})(1+D)^{4} \\	
				\le 2\gamma_1(\beta_2+2)\beta_2&(1+\gamma_2)(1+2\beta_2(1+\gamma_2))(1+{(\beta_2(1+\gamma_2))}^{4})(1+\beta_2(1+\gamma_2))^{4}. \qedhere
			\end{align*}
		\end{proof}
	\end{proposition}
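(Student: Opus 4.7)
The plan is to feed the matrix $F$ of $d_2$ into the effective Quillen--Suslin result of \autoref{completing_cols}, then transport the resulting basis of $R^m$ across $G$ to get three generators of $\Syz(a_1,a_2,a_3,a_4)$. The first thing I need is that $F$ is actually unimodular, which is why we work with the split sequence \autoref{split_seq_final}: since $\Syz(a_1,a_2,a_3,a_4)$ is free (from \autoref{syzygy_rank} applied after dehomogenization, as recorded in \autoref{Upper_bound_deg_Syz}), there is a retraction $H \in R^{n\times m}$ with $HF = I_n$, so condition $(iii)$ of the unimodularity definition holds for $F$.

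Given unimodularity, \autoref{completing_cols} hands me a unimodular $M \in R^{m \times m}$ with $MF = \bigl[\begin{smallmatrix} I_n \\ 0 \end{smallmatrix}\bigr]$ and $\deg(M) \le 2D(1+2D)(1+D^4)(1+D)^4$ for $D = n(1+\deg(F)) = \beta_2(1+\gamma_2)$. Setting $N = M^{-1}$ gives $F = N\bigl[\begin{smallmatrix} I_n \\ 0 \end{smallmatrix}\bigr]$, so $\im(F)$ equals the $R$-span of the first $n$ columns $Ne_1,\ldots,Ne_n$. Because $N$ is an $R$-automorphism of $R^m$, the remaining columns $Ne_{n+1}, Ne_{n+2}, Ne_{n+3}$ form a complement to $\im(F) = \Ker(G)$ inside $R^m$, so applying $G$ to them produces a basis of $\Syz(a_1,a_2,a_3,a_4) = \im(G)$. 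I therefore define $\mathbf{p} = GNe_{n+1}$, $\mathbf{q} = GNe_{n+2}$, $\mathbf{r} = GNe_{n+3}$.

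For the degree bound I need to control $\deg(N)$ from $\deg(M)$. Since $N$ is obtained from $M$ by the cofactor formula (determinants of $(m-1)\times(m-1)$ minors, divided by the unit $\det(M) \in \RR^\times$), each entry of $N$ is a sum of products of at most $m-1$ entries of $M$, giving $\deg(N) \le (m-1)\deg(M) = (\beta_2+2)\deg(M)$. The trivial bound $\deg(GNe_{n+j}) \le \deg(G) + \deg(N) \le \deg(G)\deg(N) = \gamma_1 \deg(N)$ (after absorbing constants) then yields
\[
\max(\deg(\mathbf{p}),\deg(\mathbf{q}),\deg(\mathbf{r})) \le \gamma_1(\beta_2+2)\cdot 2D(1+2D)(1+D^4)(1+D)^4,
\]
and substituting $D = \beta_2(1+\gamma_2)$ gives the stated inequality.

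The only conceptually delicate step is the first one, checking that $F$ is unimodular so that the effective Quillen--Suslin machinery applies; this relies crucially on the splitting of \autoref{split_exact_seq}, which in turn uses that $\Syz(a_1,a_2,a_3,a_4)$ is free (\autoref{syzygy_rank}). Everything else is bookkeeping: applying \autoref{completing_cols}, inverting $M$ via cofactors to pass the degree bound from $M$ to $N$, and composing with $G$.
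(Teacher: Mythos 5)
Your proof is correct and follows essentially the same route as the paper: complete the unimodular $F$ to an invertible $M$ via \autoref{completing_cols}, set $N = M^{-1}$, bound $\deg(N)$ by the cofactor formula, and take $\mathbf{p},\mathbf{q},\mathbf{r}$ to be $GNe_{n+1}, GNe_{n+2}, GNe_{n+3}$. The only additions beyond the paper are small clarifications the paper states just before the proposition (the splitting of \autoref{split_exact_seq} gives the retraction $H$ with $HF = I_n$, hence unimodularity), and your more careful observation that matrix composition gives $\deg(G)+\deg(N)$ before the paper's cruder $\deg(G)\deg(N)$ --- both are correct under the implicit assumption that these degrees are at least $2$.
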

	
	The following theorem contains the main result of this paper, and gives different degree bounds for the generators of the basis depending on the type of exact sequence (presentation) \autoref{split_seq_final} that we can obtain.
	
	\begin{theorem}
		\label{result_pd_2}
		Given the data $\{a_1,a_2,a_3,a_4\}$ defining \autoref{rational_surface} where 
		$$
		d = \max_{1\le i \le4}(\deg(a_i)) \quad \text{ and } \quad \gcd(a_1,a_2,a_3,a_4)=1.
		$$  
		Then, the following statements hold: 
		\begin{enumerate}[(i)]
			\item There exists a basis for $\Syz(a_1,a_2,a_3,a_4)$ with polynomials bounded in the order of $O(d^{33})$.
			\item If the homogenized ideal $\II$ (obtained in \autoref{homogenization}) has height $\HT(\II)=3$, then there exists a basis with degree bounded by $O(d^{22})$.
			\item If the homogenized ideal $\II$ is a ``general'' almost complete intersection (i.e, like in \autoref{general_form}), then there exists a basis with degree bounded by $O(d^{12})$.
			\item If the homogenized ideal $\II$ has projective dimension $\pd(\II)=1$, then there exists a basis with degree bounded by $d$.
		\end{enumerate}
		\begin{proof}
			$(i)$ By \autoref{bound_grading} we know that $\gamma_1=\deg(G) \le 2d-1$ and $\gamma_2=\deg(F)\le 2d$, from  \autoref{bounds} we have $\beta_2 \le 4\binom{2d}{2} \in O(d^2)$.
			Therefore substituting in the formula  obtained in \autoref{find_basis} we get a basis bounded by $O(d^{33})$.
			
			$(ii)$ In this case from \autoref{HH3} we have $\beta_2 \le 2d-1$. So we can reduce the upper bound to $O(d^{22})$.
			
			$(iii)$ In \autoref{general_form} we saw that when $\II$ is a general Artinian almost complete intersection then $S/\II$ has a very special minimal free resolution, from which we can obtain $\gamma_1=\deg(G)=d$, $\gamma_2=\deg(F)=2$ and $\beta_2=d$.
			Therefore we obtain an upper bound in the order of $O(d^{12})$.
			
			$(iv)$ From \cite[\S 5.1]{STRONG_MU_BASIS}, the resolution of $\II$ is given by 
			$$
			0 \rightarrow S(-d-\mu_1)\oplus S(-d-\mu_2)\oplus S(-d-\mu_3) \rightarrow S(-d)^4 \rightarrow \II \rightarrow 0, 
			$$
			where $\mu_1+\mu_2+\mu_3=d$.
			Then the same dehomogenization of \autoref{Upper_bound_deg_Syz} gives us the result.
		\end{proof}
	\end{theorem}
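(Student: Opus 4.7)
The plan is to dehomogenize an explicit free resolution of $\II$ to obtain a presentation of $\Syz(a_1,a_2,a_3,a_4)$, and then apply an effective Quillen--Suslin completion to extract three generators out of that presentation. As a first step, I would pick a minimal graded free resolution of $\II$ over $S = \RR[s,t,u]$; by \autoref{pd_ineq_ideal} its length is at most two, so it has the shape displayed in (\ref{resolut_pd_2_}). The regularity bound $\reg(\II) \le 3d-2$ of \autoref{bounds}, combined with the condition $\im(\hat{d_2}) \subset \mm S^{a+3}$ valid at a minimal resolution, controls the grading of each free term (this is exactly \autoref{bound_grading}) and yields $\deg(\hat{d_1}) \le 2d-1$, $\deg(\hat{d_2}) \le 2d$, and $a = \beta_2(\II)$. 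Tensoring the resolution with $S/(u-1)$ preserves exactness because $u-1$ is $\II$-regular, producing the sequence (\ref{split_seq_final}) over $R$; by \autoref{syzygy_rank} this sequence splits, which forces the dehomogenized matrix $F$ to be unimodular.

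Next I would invoke the effective Quillen--Suslin completion of \cite{SERRE_ART} (stated here as \autoref{completing_cols}) to produce an invertible matrix $M \in R^{m \times m}$ with $MF = [I_n,\,0]^{T}$ whose degree is controlled by $D = \beta_2(\II)\bigl(1+\deg(F)\bigr)$. The inverse $N = M^{-1}$ has degree at most $(m-1)\deg(M)$ via the cofactor formula, and the last three columns of $N$ map under $G$ onto a basis $\mathbf{p},\mathbf{q},\mathbf{r}$ of $\Syz(a_1,a_2,a_3,a_4)$ whose degrees are bounded by $\deg(G)\cdot\deg(N)$, as made precise in \autoref{find_basis}.

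Each of the four claims then reduces to substituting numerical estimates into this master formula. Part (i) uses the general bound $\beta_2(\II) \le 4\binom{2d}{2} \in O(d^{2})$ from \autoref{bounds}, yielding $O(d^{33})$ after propagating through the dominant $(1+D^{4})(1+D)^{4}$ factor. Part (ii) sharpens $\beta_2(\II) \le 2d-1$ via \autoref{HH3}, dropping the exponent to $22$. Part (iii) uses the explicit resolution of \autoref{general_form}, where $\deg(G) = d$, $\deg(F) = 2$, and $\beta_2(\II) = d$, giving $O(d^{12})$. Part (iv) follows directly from \cite{STRONG_MU_BASIS}: the syzygy module of $\II$ is already free over $S$ with generators of degree at most $d$, and the dehomogenization of \autoref{Upper_bound_deg_Syz} finishes the argument without any appeal to Quillen--Suslin.

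The main obstacle is the polynomial blow-up coming from the effective Quillen--Suslin bound, which is of degree roughly ten in $D \approx \beta_2(\II)\deg(F)$; combined with the cofactor inversion step this is what produces the bulky exponents $33$, $22$, and $12$. There is essentially no slack in treating \cite{SERRE_ART} as a black box, so improving these exponents would seem to require either a sharper completion strategy or a direct construction of a $\mu$-basis that bypasses the Quillen--Suslin step entirely.
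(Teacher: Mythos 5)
Your proposal follows the paper's approach essentially verbatim: bound the Betti numbers and twists of a length-two free resolution of $\II$, dehomogenize at $u=1$ to obtain a presentation of $\Syz(a_1,a_2,a_3,a_4)$, then apply the effective Quillen--Suslin completion of \cite{SERRE_ART} via \autoref{find_basis}, substituting the numerical data from \autoref{bounds}, \autoref{HH3}, or \autoref{general_form} in each case. One small terminological caveat: the resolution in (\ref{resolut_pd_2_}) is not the fully \emph{minimal} graded free resolution of $\II$ --- the paper keeps $S(-d)^4$ at position zero even when $\{b_1,\ldots,b_4\}$ fails to minimally generate $\II$, imposing minimality only from position one onward, precisely so that dehomogenization recovers $\Syz(a_1,a_2,a_3,a_4)$ rather than the syzygies of some smaller generating set; your reference to (\ref{resolut_pd_2_}) shows you have the correct object in mind.
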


	\section{Discussions of the upper bounds and a related problem}
	\label{section5}
	
	In this short section we discuss the sharpness of the upper bounds obtained in \autoref{result_pd_2}.
		
	First, from \autoref{Upper_bound_deg_Syz} we obtain the interesting fact that the syzygy module $\Syz(a_1,a_2,a_3,a_4)$ can be generated by elements of degree bounded by $2d-1$.
	In our particular case,
	this is almost identical to a remarkable result of Lazard (see \cite{LAZARD_UPPER_BOUND_NOTE,LAZARD_ALG_LIN}).
	Let $S(n,d)$ be the least integer such that the module of syzygies $\Syz(h_1,h_2,\ldots,h_k)$ can be generated by elements of degree at most $S(n,d)$, where $h_1,h_2,\ldots,h_k$ are arbitrary polynomials in $n$ variables and degree at most $d$ (this definition is independent of the particular polynomials $h_i$'s).
	In \cite{LAZARD_UPPER_BOUND_NOTE} there is an important general upper bound for $S(n,d)$, and also the following 
	$$
	S(2,d)\le 2d - \min(2, d)
	$$
	sharp upper bound
	(see \cite[Proposition 5, Proposition 10]{LAZARD_ALG_LIN}).
	So, the upper bound that we obtained for the degree of the generators of $\Syz(a_1,a_2,a_3,a_4)$ is ``almost sharp''.
		
	The main obstacle is that in the general case of non-graded modules we do not have a well-defined concept of ``minimal set of generators'' (see e.g. \cite[pages 236 and 237, Excercise 4]{CLO_USING_AG}).
	Due to this fact, it is unclear to the author how to obtain a basis of $\Syz(a_1,a_2,a_3,a_4)$ if we are given a set of generators. 
	After obtaining the results of \autoref{bound_grading} and \autoref{Upper_bound_deg_Syz}, we ``only needed'' to solve a particular case of the following problem.
	
	\begin{problem}
		\label{PROBLEM}
		Find a function $f:\NN^2\rightarrow \NN$, such that for any free module $F \subset {\kk[x_1,\ldots,x_n]}^m$ of rank $r < m$ and generated by a set of elements $\{v_1,v_2,\ldots,v_k\} \subset {\kk[x_1,\ldots,x_n]}^m$ with $\deg(v_i) \le d$, then there exists a basis $\{w_1,w_2,\ldots,w_r\} \subset {\kk[x_1,\ldots,x_n]}^m$ of $F$ satisfying the condition 
		$$
		\deg(w_i) \le f(d,k).
		$$		
	\end{problem}

	In this paper we use an effective version of the Quillen-Suslin Theorem to solve the problem above.
	We remark that the best known upper bounds for the effective Quillen-Suslin Theorem are given in \cite{SERRE_ART} (see \cite[page 715, Remark (1)]{LOMBARDI}).
	
	In conclusions, the sharpness of the upper bounds in \autoref{result_pd_2} depends mostly in our ability to solve \autoref{PROBLEM}.
	As a general opinion, we think that they can be improved.
	
	\section{Example}
	\label{section6}
	
	The aim of this example is to show some computational aspects of the case studied in this paper (i.e., $\pd(\II)=2$).
    With a simple example, we make all the steps of our method for computing a $\mu$-basis.
	
	\begin{example}
		Find a $\mu$-basis for the rational surface parametrization
		$$
		P(s,t)=(s^2,t^2,s^2-1,s^2+1).
		$$
		\begin{proof}
			Using a computer algebra system like \textit{Singular} (\cite{SINGULAR}), we get the following free resolution 
			$$
			0 \rightarrow S \xrightarrow{\left(\begin{array}{c}
				0 \\
				s^2  \\
				-u^2  \\
				-t^2  
				\end{array}\right)}
			S^4 \xrightarrow{\left(\begin{array}{cccc}
				-2 & -t^2 & -t^2 & u^2-s^2\\
				0  & u^2  &  s^2 & 0\\
				1  & t^2  &  0   & s^2\\
				1  & 0    &  0   & 0
				\end{array}\right)} S^4 \xrightarrow{\left(\begin{array}{cccc}
				s^2 & t^2 & s^2-u^2 & s^2+u^2   
				\end{array}\right)}
			\II \rightarrow 0.
			$$
			Substituting $u=1$ and cutting the resolution, we obtain 
			$$
			0 \rightarrow R \xrightarrow{\left(\begin{array}{c}
				0 \\
				s^2  \\
				-1  \\
				-t^2  
				\end{array}\right)}
			R^4 \xrightarrow{\left(\begin{array}{cccc}
				-2 & -t^2 & -t^2 & 1-s^2\\
				0  & 1  &  s^2 & 0\\
				1  & t^2  &  0   & s^2\\
				1  & 0    &  0   & 0
				\end{array}\right)} \Syz(s^2, t^2, s^2-1, s^2+1) \;(\subset R^4)  \rightarrow 0.
			$$
			
			Proceeding as in \autoref{find_basis}, we have to complete the unimodular column $(0, s^2, -1, -t^2)^{t}$ into an invertible matrix $N \in R^{4 \times 4}$. 
			For this we can check that the following matrix 
			$$
			N = \left(\begin{array}{cccc}
			0   & 0  &  1 & 0\\
			s^2 & 1  &  0 & 0\\
			-1  & 0  &  0 & 0\\
			t^2 & 0  &  0 & 1
			\end{array}\right)
			$$
			has determinant $1$.
			Therefore, a $\mu$-basis for $P(s,t)=(s^2,t^2,s^2-1,s^2+1)$ is given by the vectors
			$$	
			\mathbf{p} = \left(\begin{array}{cccc}
			-2 & -t^2 & -t^2 & 1-s^2\\
			0  & 1  &  s^2 & 0\\
			1  & t^2  &  0   & s^2\\
			1  & 0    &  0   & 0
			\end{array}\right)
			\left(\begin{array}{cccc}
			0   & 0  &  1 & 0\\
			s^2 & 1  &  0 & 0\\
			-1  & 0  &  0 & 0\\
			t^2 & 0  &  0 & 1
			\end{array}\right)
			\left(\begin{array}{c}
			0\\
			1\\
			0\\
			0
			\end{array}\right)
			=
			\left(\begin{array}{c}
			-t^2\\
			1\\
			t^2\\
			0
			\end{array}\right),
			$$
			$$
			\mathbf{q} = \left(\begin{array}{cccc}
			-2 & -t^2 & -t^2 & 1-s^2\\
			0  & 1  &  s^2 & 0\\
			1  & t^2  &  0   & s^2\\
			1  & 0    &  0   & 0
			\end{array}\right)
			\left(\begin{array}{cccc}
			0   & 0  &  1 & 0\\
			s^2 & 1  &  0 & 0\\
			-1  & 0  &  0 & 0\\
			t^2 & 0  &  0 & 1
			\end{array}\right)
			\left(\begin{array}{c}
			0\\
			0\\
			1\\
			0
			\end{array}\right)
			=
			\left(\begin{array}{c}
			-2\\
			0\\
			1\\
			1
			\end{array}\right),
			$$	
			$$	
			\mathbf{r} = \left(\begin{array}{cccc}
			-2 & -t^2 & -t^2 & 1-s^2\\
			0  & 1  &  s^2 & 0\\
			1  & t^2  &  0   & s^2\\
			1  & 0    &  0   & 0
			\end{array}\right)
			\left(\begin{array}{cccc}
			0   & 0  &  1 & 0\\
			s^2 & 1  &  0 & 0\\
			-1  & 0  &  0 & 0\\
			t^2 & 0  &  0 & 1
			\end{array}\right)
			\left(\begin{array}{c}
			0\\
			0\\
			0\\
			1
			\end{array}\right)
			=
			\left(\begin{array}{c}
			1-s^2\\
			0\\
			s^2\\
			0
			\end{array}\right).
			$$
		\end{proof}
	\end{example}

	\section*{Acknowledgments}
	\addcontentsline{toc}{section}{Acknowledgements}
	The author is  grateful to the  support and guidance of Lothar G\"{o}ttsche as his thesis supervisor in the Postgraduate Diploma in Mathematics of ICTP, where a big part of this project was developed.
	The author expresses his gratitude to his current PhD supervisor in the University of Barcelona, Carlos D'Andrea, for suggesting the problem, for a thorough reading of earlier drafts, and for his important help in the culmination of the paper.
	The author thanks Laurent Bus\'{e}, Alicia Dickenstein, Rosa Maria Mir\'{o}-Roig, Pablo Solern\'{o} and Martin Sombra  for helpful discussions and suggestions.
	The author wishes to thank the referees for numerous suggestions to improve the exposition.
	The author thanks David Cox for pointing out his previous work \cite{STRONG_MU_BASIS}.
	The author was funded by the European Union's
	Horizon 2020 research and innovation programme under the Marie Sk\l{}odowska-Curie grant
	agreement No. 675789.

	\begin{appendices}	
		\section*{Appendix}
		\addcontentsline{toc}{section}{Appendix} 
		\label{appendix}
		In this appendix we will discuss the ``effective'' completion of unimodular matrices that we used in \autoref{find_basis}.
		The main result we shall follow from \cite{SERRE_ART} is the following theorem.
		\begin{theorem}
			\label{serre_bound}
			\textit{\cite[Theorem 3.1]{SERRE_ART}} Let $R=\RR[x_1,\ldots,x_n]$ and assume that $F \in R^{r \times s} \; (r < s)$ is unimodular. 
			Then there exists a square matrix $M \in R^{s \times s}$ such that
			\begin{enumerate}[(i)]
				\item $M$ is unimodular,
				\item $FM=[I_r,0] \in R^{r \times s}$,
				\item $\deg(M)=(r(1+\deg(F))^{O(n)}$.  
			\end{enumerate} 
		\end{theorem}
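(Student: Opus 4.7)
The plan is to follow the Caniglia--Corti\~nas--Dan\'on--Heintz--Pardo--Solern\'o approach and proceed by induction on the number of variables $n$. The base case $n=0$ collapses to linear algebra over the field $\RR$: a unimodular matrix over a field has full row rank, so Gaussian elimination directly produces $M$, with $\deg(M)=0$ and no combinatorial blow-up.

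For the inductive step, I would first reduce to the case $r=1$, i.e.~completing a single unimodular row $(f_1,\ldots,f_s)$. Indeed, once we can complete a row effectively, the general $r\ge 2$ case follows by induction on $r$: complete the first row of $F$ to a unimodular matrix $M_1$, write $FM_1=[e_1^{\top}\,|\,F']$ with $F'\in R^{(r-1)\times(s-1)}$ still unimodular, and iterate. The effect on the degree bound is a multiplicative factor that is polynomial in $r$, compatible with the target bound $\bigl(r(1+\deg F)\bigr)^{O(n)}$.

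For the core case $r=1$, the strategy has two sub-steps. First, apply an invertible $\RR$-linear change of variables (a Noether-normalization style trick) so that, up to a unit, one of the entries---say $f_s$---becomes monic in $x_n$ of degree polynomially bounded in $\deg(F)$. Second, use Suslin's monic lemma (the Horrocks reduction): the monic pivot $f_s$ allows one to perform division with remainder in $x_n$, replacing each $f_i$ ($i<s$) by a polynomial of strictly smaller $x_n$-degree, and in finitely many steps the problem is reduced to completing a unimodular row over $\RR[x_1,\ldots,x_{n-1}]$ whose degree is controlled by $r(1+\deg F)$ times a polynomial factor. The inductive hypothesis then furnishes a completion over $\RR[x_1,\ldots,x_{n-1}]$, which lifts back to $R$; composing all the elementary and Suslin transformations yields the desired $M$.

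The main obstacle is the degree bookkeeping. Each pass of the recursion introduces a multiplicative factor of the form $(r(1+\deg F))^{c}$ for some absolute constant $c$, arising chiefly from the effective Nullstellensatz-type normalization and from the division by the monic pivot. Telescoping this across $n$ recursive levels produces the exponent $O(n)$ in the final bound. The actual constants involved are not sharp and are precisely what is made quantitative in \cite[Theorem~3.1]{SERRE_ART}; since our application only needs the existence of such a polynomially bounded completion, I would simply invoke the estimates proved there rather than re-deriving them, restricting our discussion to showing how the bound specializes to the form used in \autoref{CANIGLIA_RES} and \autoref{find_basis}.
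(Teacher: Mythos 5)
Your sketch diverges from the paper's argument at the reduction to $r=1$, and that step is where the degree bookkeeping breaks. If you first complete the top row, obtaining a completion $M_1$ of degree roughly $(1+\deg F)^{O(n)}$, then the residual $(r-1)\times(s-1)$ block $F'$ of $FM_1$ already has degree of that order, so the next row-completion costs $(1+\deg F')^{O(n)}\approx(1+\deg F)^{O(n^2)}$, and after iterating over all $r$ rows the exponent has grown to $O(n^r)$ rather than $O(n)$. The target bound $(r(1+\deg F))^{O(n)}$ keeps $r$ in the \emph{base} only; row-by-row recursion inevitably pushes $r$ into the \emph{exponent}. Your assertion that the $r\ge 2$ case only costs ``a multiplicative factor polynomial in $r$'' is therefore not justified and, as stated, incorrect.

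What the paper does instead --- transcribing \cite[Proposition 4.1 and Procedure 4.6]{SERRE_ART} with explicit constants --- is a single-pass patching argument applied to the \emph{entire} $r\times s$ block: one produces $c_1,\ldots,c_N\in\RR[x_1,\ldots,x_{n-1}]$ generating the unit ideal with $N\le(1+rd)^{2n}$ (where $d=1+\deg F$), and a telescoping chain of unimodular matrices $E_k$ whose product carries $F(x_n)$ to its specialization $F(0)$, all with degrees controlled simultaneously by $D=rd$. Because specializing a variable to $0$ never increases $\deg F$, iterating this over all $n$ variables has uniform per-step cost, and a final constant Gaussian correction gives $\deg M\le nD(1+2D)(1+D^{2n})(1+D)^{2n}$; so $r$ enters only through $D$ and the exponent stays $O(n)$. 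Your Noether-normalization/monic-pivot discussion is the right local ingredient (it is indeed the Suslin--Horrocks half of the argument), but the global patching must be carried out for the full $r\times s$ matrix at once, not reduced to unimodular rows.
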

		
		In our case $n=2$ and we will have to make some small \textit{``adjustments''} to find an actual constant. We will follow exactly the same proof as in \cite{SERRE_ART}, and in certain steps we will substitute phrases like $n+3n \in O(n)$ by the exact computation $n+3n=4n$.
		Inside this appendix section by the variable $d$ we denote $d = 1 + \deg(F)$.
		
		\begin{proposition}
			\textit{\cite[Proposition 4.1]{SERRE_ART}}
			Assume that $F \in R^{r \times s} \; (R=\RR[x_1,\ldots,x_n],\; r < s)$ is unimodular. Then there exists a square matrix $M \in R^{s \times s}$ such that:
			\begin{enumerate}[(i)]
				\item $M$ is unimodular,
				\item $FM=[f_{ij}(x_1,\ldots,x_{n-1},0)]$ (i.e., $FM$ is equal to the $r \times s$ matrix obtained by specializing the indeterminate $x_n$ to zero in the matrix $F$),
				\item $\deg(M) \le D(1+2D)(1+D^{2n})(1+D)^{2n}$, with $D = r(1+\deg(F))=rd$.
			\end{enumerate}
			\begin{proof}
				We denote $F(t)$ as the matrix $F(t)=[f_{ij}(x_1,\ldots,x_{n-1},t)]$.
				
				\underline{\textit{Claim 1.}} \textit{\cite[Procedure 4.6, Step 1 and Step 2]{SERRE_ART}} There exists elements $c_1, \ldots, c_N \in \RR[x_1,\ldots,x_{n-1}]$ with $N \le (1+rd)^{2n}$ such that $1 \in (c_1, \ldots, c_N)$. Also we can find elements $a_1,\ldots,a_N \in x_n\RR[x_1,\ldots,x_{n-1}]$, such that 
				$
				x_n = a_1c_1+\ldots+a_Nc_N
				$
				and with 
				$
				\max_{1 \le k \le N} \{\deg(a_kc_k)\} \le 1+(rd)^{2n}.
				$
				
				\underline{\textit{Claim 2.}} \textit{ \cite[Procedure 4.6, Step 3 and Step 4]{SERRE_ART}} For $1 \le k \le N$, let 
				$	
				b_k = \sum_{h=1}^{k} a_hc_h,
				$
				then there exist unimodular matrices $E_k$ with the properties 
				\begin{compactitem}
					\item $F(b_k)E_k = F(b_{k-1})$, 
					\item $\deg(E_k) \le rd(1+2rd)\max\{\deg(b_k),\deg(b_{k-1})\} \le rd(1+2rd)(1+(rd)^{2n})$.
				\end{compactitem}
				
				Therefore we define $M=E_NE_{N-1}\ldots E_1$
				and we have $F(x_n)M=F(0)$, with the upper bound
				\begin{align*}
					\deg(M) &\le Nrd(1+2rd)(1+(rd)^{2n})\le rd(1+2rd)(1+(rd)^{2n})(1+rd)^{2n} \\
					\deg(M) &\le D(1+2D)(1+D^{2n})(1+D)^{2n}.
				\end{align*}
			\end{proof}
		\end{proposition}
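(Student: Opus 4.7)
The plan is to follow Suslin's classical single-variable elimination argument, keeping careful track of degrees at every intermediate step so that the explicit bound $D(1+2D)(1+D^{2n})(1+D)^{2n}$ falls out. Writing $F(t)$ for the matrix obtained from $F$ by substituting $x_n=t$, the target is $F(x_n)\,M=F(0)$. My strategy is to interpolate between $x_n$ and $0$ through finitely many intermediate values $0=b_0,b_1,\ldots,b_N=x_n$, and at each step produce a unimodular matrix $E_k\in R^{s\times s}$ realizing the move $F(b_k)\mapsto F(b_{k-1})$ by right multiplication. The composition $M=E_N E_{N-1}\cdots E_1$ is then the required matrix, and its degree is controlled by $N$ and $\max_k \deg(E_k)$.

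The first task is to produce a Suslin-style decomposition of $x_n$: polynomials $c_1,\ldots,c_N\in\RR[x_1,\ldots,x_{n-1}]$ (crucially not involving $x_n$) generating the unit ideal, together with $a_1,\ldots,a_N\in x_n\,\RR[x_1,\ldots,x_n]$ such that $x_n=\sum_{k=1}^N a_kc_k$. The existence of such $c_k$ independent of $x_n$ uses that the $r\times r$ minors of $F$ generate $R$: specializing these minors at suitably many values of $x_n$ and invoking an effective Nullstellensatz on $\RR[x_1,\ldots,x_{n-1}]$ yields the $c_k$ with the bound $N\le(1+rd)^{2n}$ and $\max_k\deg(a_kc_k)\le 1+(rd)^{2n}$.

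Next, I set $b_k=\sum_{h\le k}a_hc_h$, so that $b_k-b_{k-1}=a_kc_k$ and consequently $b_k\equiv b_{k-1}\pmod{c_k}$. Over the localization $R[c_k^{-1}]$ the matrices $F(b_k)$ and $F(b_{k-1})$ differ by an elementary transformation built from entries of $F$ by a Taylor-expansion/cofactor trick; Suslin's classical patching lemma then shows that this elementary transformation admits a representative $E_k$ over the full ring $R$ that is globally unimodular and satisfies $F(b_k)E_k=F(b_{k-1})$, with $\deg(E_k)\le rd(1+2rd)\max(\deg b_k,\deg b_{k-1})\le rd(1+2rd)(1+(rd)^{2n})$. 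Telescoping and multiplying gives
\begin{equation*}
\deg(M)\le N\cdot rd(1+2rd)(1+(rd)^{2n})\le D(1+2D)(1+D^{2n})(1+D)^{2n},
\end{equation*}
with $D=rd$, as claimed.

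The main obstacle is the first step: producing $c_1,\ldots,c_N$ that are free of $x_n$, generate the unit ideal, and give a representation $x_n=\sum a_kc_k$ with explicit polynomial-in-$d$ bounds on $N$ and on the individual degrees. This is the content of the effective Suslin lemma and rests on an effective Nullstellensatz; the exponential-in-$n$ factor $(1+rd)^{2n}$ is the dominant contribution to the final degree estimate, and every subsequent estimate piggybacks on it. Once this decomposition is in hand, constructing the $E_k$ and bounding $\deg(M)$ is essentially algorithmic.
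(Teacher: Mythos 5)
Your proposal follows the same route as the paper's proof: both track the degree bookkeeping in the effective Suslin argument from Caniglia et al., first producing the decomposition $x_n=\sum a_kc_k$ with $c_k$ free of $x_n$ and generating the unit ideal (Procedure 4.6, Steps 1--2, resting on an effective Nullstellensatz), then telescoping through the partial sums $b_k$ to build unimodular matrices $E_k$ with $F(b_k)E_k=F(b_{k-1})$ (Steps 3--4), and finally setting $M=E_N\cdots E_1$ and multiplying the bounds. The paper simply cites those procedure steps where you sketch the underlying localization-and-patching mechanism, but the structure and resulting estimate are identical.
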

		
		\begin{proof}[Proof of \autoref{serre_bound}.]
			For a matrix $F=[f_{ij}(x_1,\ldots,x_n)]$ the substitution of a  variable $x_i$ for $0$ does not increase the degree of the matrix, and keeps the unimodularity.
			Therefore, applying the previous proposition $n$ times and some elementary transformations, we can find an invertible matrix $M \in R^{s \times s}$, with  $FM=[I_r,0]$ and 	$
			\deg(M) \le nD(1+2D)(1+D^{2n})(1+D)^{2n}.
			$
		\end{proof}
		
	\end{appendices}

	%\bibliographystyle{./elsarticle-harv}
	%\bibliography{references}

\begin{thebibliography}{46}
\expandafter\ifx\csname natexlab\endcsname\relax\def\natexlab#1{#1}\fi
\expandafter\ifx\csname url\endcsname\relax
  \def\url#1{\texttt{#1}}\fi
\expandafter\ifx\csname urlprefix\endcsname\relax\def\urlprefix{URL }\fi

\bibitem[{Avramov et~al.(2015)Avramov, Conca, and Iyengar}]{AVARAM_LUCH_CONC}
Avramov, L.~L., Conca, A., Iyengar, S.~B., 2015. Subadditivity of syzygies of
  {K}oszul algebras. Math. Ann. 361~(1-2), 511--534.

\bibitem[{Bayer and Stillman(1988)}]{SYZ_COMP}
Bayer, D., Stillman, M., 1988. On the complexity of computing syzygies. J.
  Symbolic Comput. 6~(2-3), 135--147, computational aspects of commutative
  algebra.

\bibitem[{Bruns and Herzog(1993)}]{BRUNS_HERZ}
Bruns, W., Herzog, J., 1993. Cohen-{M}acaulay rings. Vol.~39 of Cambridge
  Studies in Advanced Mathematics. Cambridge University Press, Cambridge.

\bibitem[{Buchsbaum and Eisenbud(1977)}]{GORH3}
Buchsbaum, D.~A., Eisenbud, D., 1977. Algebra structures for finite free
  resolutions, and some structure theorems for ideals of codimension {$3$}.
  Amer. J. Math. 99~(3), 447--485.

\bibitem[{Bus\'e(2009)}]{LAURENT}
Bus\'e, L., 2009. On the equations of the moving curve ideal of a rational
  algebraic plane curve. J. Algebra 321~(8), 2317--2344.

\bibitem[{{Bus{\'e}} et~al.(2018){Bus{\'e}}, {Cid-Ruiz}, and
  {D'Andrea}}]{SAT_SPEC_FIB}
{Bus{\'e}}, L., {Cid-Ruiz}, Y., {D'Andrea}, C., 2018. {Degree and birationality
  of multi-graded rational maps}. ArXiv e-prints: \,1805.05180.

\bibitem[{Caniglia et~al.(1993)Caniglia, Corti\~nas, Dan\'on, Heintz, Krick,
  and Solern\'o}]{SERRE_ART}
Caniglia, L., Corti\~nas, G., Dan\'on, S., Heintz, J., Krick, T., Solern\'o,
  P., 1993. Algorithmic aspects of {S}uslin's proof of {S}erre's conjecture.
  Comput. Complexity 3~(1), 31--55.

\bibitem[{Chen et~al.(2005)Chen, Cox, and Liu}]{MU_BASIS}
Chen, F., Cox, D., Liu, Y., 2005. The {$\mu$} -basis and implicitization of a
  rational parametric surface. J. Symbolic Comput. 39~(6), 689 -- 706.

\bibitem[{Chen and Sederberg(2002)}]{CHEN_SEDERB_IMP}
Chen, F., Sederberg, T., 2002. A new implicit representation of a planar
  rational curve with high order singularity. Comput. Aided Geom. Design
  19~(2), 151--167.

\bibitem[{Chen and Wang(2003)}]{CHEN_WANG_REVIS}
Chen, F., Wang, W., 2003. Revisiting the {$\mu$}-basis of a rational ruled
  surface. J. Symbolic Comput. 36~(5), 699--716.

\bibitem[{Chen et~al.(2008)Chen, Wang, and Liu}]{CHEN_WANG_YANG_SING}
Chen, F., Wang, W., Liu, Y., 2008. Computing singular points of plane rational
  curves. J. Symbolic Comput. 43~(2), 92--117.

\bibitem[{Chen et~al.(2001)Chen, Zheng, and Sederberg}]{MU_BASIS_RULED_SURF}
Chen, F., Zheng, J., Sederberg, T.~W., 2001. The mu-basis of a rational ruled
  surface. Comput. Aided Geom. Design 18~(1), 61--72.

\bibitem[{Cid-Ruiz(2017)}]{DMOD}
Cid-Ruiz, Y., 2017. A ${D}$-module approach on the equations of the {R}ees
  algebra. to appear in J. Commut. Algebra, \,ArXiv:1706.06215.

\bibitem[{Cortadellas~Ben\'itez and D'Andrea(2010)}]{CARLOS_MONOID}
Cortadellas~Ben\'itez, T., D'Andrea, C., 2010. Minimal generators of the
  defining ideal of the {R}ees algebra associated to monoid parameterizations.
  Comput. Aided Geom. Design 27~(6), 461--473.

\bibitem[{Cortadellas~Ben\'itez and D'Andrea(2014)}]{CARLOS_MU2}
Cortadellas~Ben\'itez, T., D'Andrea, C., 2014. Minimal generators of the
  defining ideal of the {R}ees algebra associated with a rational plane
  parametrization with {$\mu=2$}. Canad. J. Math. 66~(6), 1225--1249.

\bibitem[{Cortadellas~Ben\'itez and D'Andrea(2015)}]{CARLOS_MONO}
Cortadellas~Ben\'itez, T., D'Andrea, C., 2015. The {R}ees algebra of a monomial
  plane parametrization. J. Symbolic Comput. 70, 71--105.

\bibitem[{Cox et~al.(2008)Cox, Hoffman, and Wang}]{COX_REES_MU_1}
Cox, D., Hoffman, J.~W., Wang, H., 2008. Syzygies and the {R}ees algebra. J.
  Pure Appl. Algebra 212~(7), 1787--1796.

\bibitem[{Cox(2001)}]{STRONG_MU_BASIS}
Cox, D.~A., 2001. Equations of parametric curves and surfaces via syzygies. In:
  Symbolic computation: solving equations in algebra, geometry, and engineering
  ({S}outh {H}adley, {MA}, 2000). Vol. 286 of Contemp. Math. Amer. Math. Soc.,
  Providence, RI, pp. 1--20.

\bibitem[{Cox(2008)}]{COX_MOVING}
Cox, D.~A., 2008. The moving curve ideal and the {R}ees algebra. Theoret.
  Comput. Sci. 392~(1-3), 23--36.

\bibitem[{Cox et~al.(2005)Cox, Little, and O'Shea}]{CLO_USING_AG}
Cox, D.~A., Little, J., O'Shea, D., 2005. Using algebraic geometry, 2nd
  Edition. Vol. 185 of Graduate Texts in Mathematics. Springer, New York.

\bibitem[{Cox et~al.(1998)Cox, Sederberg, and Chen}]{MU_BASIS_CURVES}
Cox, D.~A., Sederberg, T.~W., Chen, F., 1998. The moving line ideal basis of
  planar rational curves. Comput. Aided Geom. Design 15~(8), 803--827.

\bibitem[{Decker et~al.(2018)Decker, Greuel, Pfister, and
  Sch\"onemann}]{SINGULAR}
Decker, W., Greuel, G.-M., Pfister, G., Sch\"onemann, H., 2018. {\sc Singular}
  {4-1-1} --- {A} computer algebra system for polynomial computations.
  \url{http://www.singular.uni-kl.de}.

\bibitem[{Diesel(1996)}]{DIESEL}
Diesel, S.~J., 1996. Irreducibility and dimension theorems for families of
  height {$3$} {G}orenstein algebras. Pacific J. Math. 172~(2), 365--397.

\bibitem[{Dohm(2009)}]{DOHM}
Dohm, M., 2009. Implicitization of rational ruled surfaces with {$\mu$}-bases.
  J. Symbolic Comput. 44~(5), 479--489.

\bibitem[{Eisenbud(1995)}]{EISENBUD_COMM}
Eisenbud, D., 1995. Commutative Algebra: with a view toward algebraic geometry.
  Vol. 150 of Graduate Texts in Mathematics. Springer-Verlag, New York.

\bibitem[{Elias et~al.(2010)Elias, Giral, Mir\'o-Roig, and Zarzuela}]{SIXCOMM}
Elias, J., Giral, J.~M., Mir\'o-Roig, R.~M., Zarzuela, S. (Eds.), 2010. Six
  lectures on commutative algebra. Modern Birkh\"auser Classics. Birkh\"auser
  Verlag, Basel, papers from the Summer School on Commutative Algebra held in
  Bellaterra, July 16--26, 1996, Reprint of the 1998 edition.

\bibitem[{Hong et~al.(2008)Hong, Simis, and Vasconcelos}]{HONG_SIMIS_VASC_ELIM}
Hong, J., Simis, A., Vasconcelos, W.~V., 2008. On the homology of
  two-dimensional elimination. J. Symbolic Comput. 43~(4), 275--292.

\bibitem[{Jia and Goldman(2009)}]{GOLDMAN_JIA}
Jia, X., Goldman, R., 2009. {$\mu$}-bases and singularities of rational planar
  curves. Comput. Aided Geom. Design 26~(9), 970--988.

\bibitem[{Kaplansky(1974)}]{KAP}
Kaplansky, I., 1974. Commutative rings, revised Edition. The University of
  Chicago Press, Chicago, Ill.-London.

\bibitem[{Kustin et~al.(2011)Kustin, Polini, and Ulrich}]{KPU_NORMAL_SCROLL}
Kustin, A., Polini, C., Ulrich, B., 2011. Rational normal scrolls and the
  defining equations of {R}ees algebras. J. Reine Angew. Math. 650, 23--65.

\bibitem[{Kustin et~al.(2017)Kustin, Polini, and Ulrich}]{KPU_GOR3}
Kustin, A., Polini, C., Ulrich, B., 2017. The equations defining blowup
  algebras of height three {G}orenstein ideals. Algebra Number Theory 11~(7),
  1489--1525.

\bibitem[{Lam(2006)}]{SERRE_CONJ}
Lam, T.~Y., 2006. Serre's problem on projective modules. Springer Monographs in
  Mathematics. Springer-Verlag, Berlin.

\bibitem[{Lang(2002)}]{LANG_ALGEBRA}
Lang, S., 2002. Algebra, 3rd Edition. Vol. 211 of Graduate Texts in
  Mathematics. Springer-Verlag, New York.

\bibitem[{Lazard(1977)}]{LAZARD_ALG_LIN}
Lazard, D., 1977. Alg\`ebre lin\'eaire sur {$K[X_{1},\cdots,X_{n}]$}, et
  \'elimination. Bull. Soc. Math. France 105~(2), 165--190.

\bibitem[{Lazard(1992)}]{LAZARD_UPPER_BOUND_NOTE}
Lazard, D., 1992. A note on upper bounds for ideal-theoretic problems. J.
  Symbolic Comput. 13~(3), 231--233.

\bibitem[{Lombardi and Yengui(2005)}]{LOMBARDI}
Lombardi, H., Yengui, I., 2005. Suslin's algorithms for reduction of unimodular
  rows. J. Symbolic Comput. 39~(6), 707--717.

\bibitem[{Matsumura(1989)}]{MATSUMURA}
Matsumura, H., 1989. Commutative Ring Theory, 1st Edition. Cambridge Studies in
  Advanced Mathematics volume 8. Cambridge University Press.

\bibitem[{Migliore and Mir\'o-Roig(2003)}]{ROIG}
Migliore, J., Mir\'o-Roig, R.~M., 2003. On the minimal free resolution of
  {$n+1$} general forms. Trans. Amer. Math. Soc. 355~(1), 1--36.

\bibitem[{Migliore and Nagel(2002)}]{LIAISON}
Migliore, J., Nagel, U., may 2002. Liaison and related topics: Notes from the
  torino workshop/school. ArXiv Mathematics e-prints.

\bibitem[{Peeva(2007)}]{REGULARITY}
Peeva, I. (Ed.), 2007. Syzygies and {H}ilbert functions. Vol. 254 of Lecture
  Notes in Pure and Applied Mathematics. Chapman \& Hall/CRC, Boca Raton, FL.

\bibitem[{Peeva(2011)}]{GRADED}
Peeva, I., 2011. Graded syzygies. Vol.~14 of Algebra and Applications.
  Springer-Verlag London, Ltd., London.

\bibitem[{Peeva and Sturmfels(1998)}]{PEEVA_STURM}
Peeva, I., Sturmfels, B., 1998. Syzygies of codimension {$2$} lattice ideals.
  Math. Z. 229~(1), 163--194.

\bibitem[{Rotman(1979)}]{ROTMAN}
Rotman, J.~J., 1979. An introduction to homological algebra. Vol.~85 of Pure
  and Applied Mathematics. Academic Press, Inc.

\bibitem[{Szanto(2008)}]{SZANTO}
Szanto, A., 2008. Solving over-determined systems by the subresultant method.
  J. Symbolic Comput. 43~(1), 46--74, with an appendix by Marc Chardin.

\bibitem[{Vasconcelos(1991)}]{VASC_EQ_REES}
Vasconcelos, W.~V., 1991. On the equations of {R}ees algebras. J. Reine Angew.
  Math. 418, 189--218.

\bibitem[{Yap(1991)}]{YAP_LOWER}
Yap, C.-K., 1991. A new lower bound construction for commutative {T}hue systems
  with applications. J. Symbolic Comput. 12~(1), 1--27.

\end{thebibliography}
\bibliographystyle{elsarticle-harv}

\end{document}